\documentclass[11pt]{amsart}
\usepackage{amsmath,amssymb,amsthm,upref,graphicx,mathrsfs, enumerate,esint}
\usepackage{esint}
\usepackage{textcomp}
\usepackage{array}
\usepackage{color,xcolor}
\usepackage{latexsym}
\usepackage{amsmath}
\usepackage{amssymb}
\usepackage{amsthm} 
\usepackage{amscd}
\usepackage{color}
\usepackage{comment}

\numberwithin{equation}{section}
\allowdisplaybreaks

\newtheorem{theorem}{Theorem}[section]
\newtheorem{proposition}[theorem]{Proposition}
\newtheorem{lemma}[theorem]{Lemma}
\newtheorem{corollary}[theorem]{Corollary}

\theoremstyle{definition}
\newtheorem{definition}[theorem]{Definition}
\newtheorem{example}[theorem]{Example}
\newtheorem{remark}[theorem]{Remark}

\begin{document}
\title[Ridgelet transforms in Banach lattices]{Ridgelet Transforms of Functions in Banach lattices}

\author{Mitsuo Izuki}
\thanks{M.\,Izuki\,:\,Faculty of Liberal Arts and Sciences, Tokyo City University}

\author{Takahiro Noi}
\thanks{T.\,Noi\,:\,Department of Mathematical and Data Science, Otemon Gakuin University}

\author{Yoshihiro Sawano}
\thanks{Y.\,Sawano\,:\,Department of Mathematics, Graduate School of Science and Engineering, Chuo University}

\author{Hirokazu Tanaka}
\thanks{H.\,Tanaka\,:\,Faculty of Information Technology, Tokyo City University}

\date{}

\address{
Faculty of Liberal Arts and Sciences\\
Tokyo City University\\
1-28-1 Tamadutsumi, Setagaya-ku, Tokyo 158-8557, Japan
}
\email{izuki@tcu.ac.jp}

\address{
Department of Mathematical and Data Science\\
Otemon Gakuin University\\
2-1-15 Nishiai, Ibaraki, Osaka 567-8502, Japan
}
\email{taka.noi.hiro@gmail.com}

\address{
Department of Mathematics\\
Graduate School of Science and Engineering, Chuo University\\
1-13-27 Kasuga, Bunkyo-ku, Tokyo 112-8551, Japan
}
\email{yoshihiro-sawano@celery.ocn.ne.jp}

\address{
Faculty of Information Technology\\
Tokyo City University\\
1-28-1 Tamadutsumi, Setagaya-ku, Tokyo 158-8557, Japan
}
\email{htanaka@tcu.ac.jp}

\maketitle
{\bf Key words{\rm:}}
ridgelet transform $\cdot$
$k$-plane Radon transform $\cdot$
Banach lattices $\cdot$
fractional integrals $\cdot$
reproducing formulas

{\bf Mathematics Subject Classification (2020){\rm:}}
42B35 $\cdot$ 46E30 $\cdot$ 42B20 $\cdot$ 42B25
\begin{abstract}
We establish a reproducing formula for the ridgelet transform on $\mathbb{R}^n$ in the framework of Banach lattices introduced in a recent paper by Nieraeth.
Our approach is based on the $k$-plane Radon transform and a wavelet-type
reconstruction operator acting on functions defined on the Grassmannian of
$k$-dimensional affine planes.
Under mild structural assumptions on the underlying Banach lattice, 
we prove that the ridgelet reconstruction converges both in the
lattice norm and almost everywhere.
The admissibility conditions on the wavelet function are formulated in terms
of the Riemann--Liouville fractional integral.  
As a consequence, we obtain explicit inversion formulas for functions in
a Banach lattice $X$ which is contained in
$L^1({\mathbb R}^n)+L^p(\mathbb{R}^n)$
with some constant $1 \le p < \frac{n}{k}$, together with precise expressions for the
reconstruction constant.
These results provide a unified framework for ridgelet-type reproducing formulas
in a broad class of function spaces beyond the classical $L^p$ setting.
\end{abstract}

\section{Introduction}
\label{s1}

The aim of this paper is to establish a reproducing formula for the ridgelet
transform on $\mathbb{R}^n$ and the
$k$-plane Radon transform
in the setting of Banach lattices~$X$.
By a Banach lattice we mean a Banach space of (equivalence classes of)
measurable functions on $\mathbb{R}^n$ equipped with a lattice structure
such that, whenever $f \in X$ and $g$ is a measurable function satisfying
$|g(x)| \le |f(x)|$ almost everywhere, one has $g \in X$ and $\|g\|_X \le \|f\|_X$.
We will assume inclusion (into a certain sum space of Lebesgue spaces), density of $C_{\rm c}({\mathbb R}^n)$ and
stability under averaging. See Theorem \ref{thm:wavelet-reconstruction}.
Typical examples include Lebesgue spaces, Lorentz spaces, and Orlicz spaces.
Standard references on Banach lattices include
\cite{BennettSharpley1988, KreinPetuninSemenov1982, RaoRen1991}.

To formulate our results, we will recall another example of Banach lattices.
For $1 \le p < \infty$, we denote by $L^1({\mathbb R}^n) + L^p({\mathbb R}^n)$ the sum space consisting of all
measurable functions $f$ on $\mathbb{R}^n$ that admit a decomposition
\[
f = f_1 + f_p,
\qquad
f_1 \in L^1(\mathbb{R}^n), \quad f_p \in L^p(\mathbb{R}^n).
\]
This space is equipped with the norm
\[
\|f\|_{L^1+L^p}
=
\inf\bigl\{
\|f_1\|_{L^1} + \|f_p\|_{L^p}
\,:\, f = f_1 + f_p,
f_1 \in L^1(\mathbb{R}^n), 
f_p \in L^p(\mathbb{R}^n)
\bigr\}.
\]
Thus $L^1({\mathbb R}^n) + L^p({\mathbb R}^n)$ is a Banach lattice.

Next, we briefly recall
the definition of the
$k$-plane Radon transform.
Let $k \in \mathbb{N}$ with $1 \le k < n$.
Denote by ${\mathcal G}_{n,k}$ the Grassmannian of $k$-dimensional affine planes
in $\mathbb{R}^n$, equipped with its canonical measure
$d\mu_{{\mathcal G}_{n,k}}$.
For a plane $\tau \in {\mathcal G}_{n,k}$, we write $d_\tau x$ for the Lebesgue
measure on~$\tau$.
We recall the precise definition of  ${\mathcal G}_{n,k}$ and its canonical measure
more precisely in Section \ref{s2}.

\begin{definition}[$k$-plane Radon transform]\label{def:k-plane-radon}
Let $f$ be a locally integrable function on $\mathbb{R}^n$.
The $k$-plane Radon transform of $f$ is defined by
\begin{equation}\label{eq:k-plane-radon}
\hat f(\tau)
=
\int_{\tau} f(x)\, d_\tau x,
\qquad \tau \in {\mathcal G}_{n,k},
\end{equation}
provided that the integral exists.
\end{definition}

The mapping $f \mapsto \hat f$ is well defined only for functions
$f \in L^p(\mathbb{R}^n)$ with $1 \le p < \frac{n}{k}$.
A sharp restriction on the admissible range of~$p$ was established by
Solmon~\cite{Solmon1979}.
If $f \in L^p(\mathbb{R}^n)$, then $\hat f(\tau)$ is finite for almost every
$\tau \in {\mathcal G}_{n,k}$ provided that
$1 \le p < \frac{n}{k}$.
On the other hand, if $p \ge \frac{n}{k}$ and
\[
f(x) = (2+|x|)^{-n/p}\bigl(\log(2+|x|)\bigr)^{-1},
\]
then $f \in L^p(\mathbb{R}^n)$ while $\hat f(\tau) := \infty$
for all $\tau \in {\mathcal G}_{n,k}$.
Therefore, throughout this paper we consider function spaces~$X$ that are
continuously embedded into $L^1({\mathbb R}^n) + L^p({\mathbb R}^n)$ for some
$p \in [1, \frac{n}{k})$.

Increasing interest in the transform~\eqref{eq:k-plane-radon}
has been motivated by developments in approximation theory and computer science,
notably through its connections with ridgelet-type representations.
This line of research was initiated and further developed by
D.~Donoho, E.~Cand\`es, and their collaborators; see, for example,
\cite{Candes1998, Donoho1993}.
Remark that Murata and Rubin independently introduced
wavelet-like transform \cite{Murata1996}.
See \cite{SonodaMurata2017} for 
neural network with unbounded activation functions.
We next introduce a wavelet-type reconstruction operator.
Let $w$ be a wavelet function on $[0,\infty)$ satisfying suitable
regularity and decay assumptions.
In particular, we assume that
\begin{equation}\label{eq:260118-1}
\int_{0}^{R} r^{n-k-1}\,|w(r)|\,dr < \infty
\end{equation}
for every $R>0$.
Further conditions on $w$ will be specified below.

For $t>0$ and a measurable function
\[
\varphi \colon \mathcal G_{n,k} \to \mathbb C,
\]
we define the  smoothened dual $k$-plane transform $W_t^{*}$ by
\begin{equation}\label{eq:W-star-def}
W_t^{*}\varphi(x)
=
t^{-n}
\int_{\mathcal G_{n,k}}
\varphi(\tau)\,
w\!\left(\frac{|x-\tau|}{t}\right)
\,d\mu_{\mathcal G_{n,k}}(\tau),
\qquad x \in \mathbb R^{n},
\end{equation}
whenever the integral on the right-hand side is well defined.
Here $|x-\tau|$ denotes the Euclidean distance
between the point $x$ and the $k$-plane $\tau$, and
$\mu_{\mathcal G_{n,k}}$ denotes the canonical measure on
the Grassmannian $\mathcal G_{n,k}$.

We denote by
\[
\sigma_{n-1} := \frac{2\pi^{\frac{n}{2}}}{\Gamma(\frac{n}{2})}
\]
the surface area of the unit sphere $S^{n-1} \subset \mathbb R^n$.

\begin{definition}\label{def:admissible-pair}
A radial function $w$ on $\mathbb R^{n-k}$ is called
\emph{admissible} if it satisfies
\eqref{eq:260118-1} and
the decay condition
\begin{equation}\label{eq:radial-decay-condition}
\int_{\{y \in \mathbb R^{n-k} \,:\, |y|>1\}}
|y|^{\beta}\,|w(y)|\,dy
=
\sigma_{n-k-1}\int_1^\infty r^{\beta+n-k-1}|w(r)|\,dr
< \infty,
\end{equation}
for some constant $\beta>k$
as well as  the cancellation conditions
\begin{equation}\label{eq:radial-moment-conditions}
\int_{\mathbb R^{n-k}}
|y|^{j}\, w(y)\,dy
=
\sigma_{n-k-1}\int_0^\infty r^{j+n-k-1}w(r)\,dr
= 0,
\qquad
j = 0,2,4,\dots,2\left[\frac{k}{2}\right].
\end{equation}
\end{definition}

This terminology is a slight abuse in comparison with \cite{Rubin2004}.
Indeed, in \cite{Rubin2004} a pair $(u,v)$ is said to be admissible
if the function
\begin{equation}\label{eq:def-w-convolution}
w(y)
=
u*v(y)
=
\int_{\mathbb R^{n-k}}
u(z)\,v(y-z)\,dz,
\qquad y \in \mathbb R^{n-k},
\end{equation}
satisfies the conditions stated in Definition~\ref{def:admissible-pair}.

\begin{example}
Let
\[
u(y)=\Delta^{N_1}e^{-|y|^2},
\qquad
v(y)=\Delta^{N_2}e^{-|y|^2},
\qquad y \in \mathbb R^{n-k},
\]
where $N_1$ and $N_2$ are nonnegative integers such that
\[
\left[\frac{k}{2}\right] -1
<
N_1+N_2
\le
\left[\frac{k}{2}\right].
\]
Then the function $w=u*v$ is admissible.
Further examples of admissible wavelet functions can be found in
\cite[Example~5.4]{Rubin2002}.
\end{example}
We next turn to the definition of Banach lattices.
In order to formulate our results, we adopt the terminology introduced by
Nieraeth~\cite{Nieraeth26}.
For the reader's convenience, we recall the definition of Banach function
spaces on $\mathbb{R}^n$ in Definition~\ref{def:BFS}.
Throughout this paper, for a measurable set $E \subset \mathbb{R}^n$, we denote by
$\chi_E$ its indicator function.

\begin{definition}[Muckenhoupt condition]\label{def:Muckenhoupt}
Let $X$ be a Banach function space on $\mathbb{R}^n$.
We say that $X$ satisfies the \emph{Muckenhoupt condition} if
$\chi_Q \in X$ and $\chi_Q \in X'$ for every cube
$Q \subset \mathbb{R}^n$, and there exists a constant $C \ge 1$ such that
\[
\|\chi_Q\|_{X}\,\|\chi_Q\|_{X'}
\le C\,|Q|
\quad
\text{for all cubes } Q \subset \mathbb{R}^n .
\]
In this case, we write $X \in {\mathcal A}$ and denote by $[X]_{\mathcal A}$ the smallest constant
$C$ for which the above inequality holds.
\end{definition}

Denote by $B(x,R)$ the open ball in ${\mathbb R}^n$ centered at $x$ with radius $R>0$.
Vinogradov employed a different notation in \cite{Vinogradov2024}.
In his terminology, spaces satisfying
Definition~\ref{def:Muckenhoupt} are referred to as
\emph{spaces with bounded averaging}.
From this viewpoint, the class $\mathcal A$ of Banach lattices can be interpreted as
expressing a form of \emph{stability under averaging}:
there exists a constant $C>0$ such that for every $r>0$ and every
$f \in X$,
\[
\left\|
\fint_{B(\cdot,r)} f(y)\,dy
\right\|_{X}
\le C\,\|f\|_{X}.
\]
Here $\fint$ stands for the average.

The following theorem,
extending \cite[Theorem 3.1]{Rubin2004}, serves as a starting point of this paper.

\begin{theorem}\label{thm:wavelet-reconstruction}
Let $X \in {\mathcal A}$ be a Banach lattice satisfying the following conditions{\rm:}
\begin{enumerate}
\item[$(1)$]
Inclusion{\rm:}
$X \subset L^1({\mathbb R}^n) + L^p({\mathbb R}^n)$ for some
$1 \le p < \frac{n}{k}$.
\item[$(2)$]
Density{\rm:}
$C_{\mathrm c}(\mathbb{R}^n)$ is dense in~$X$.
\end{enumerate}
Assume that $\psi$ is a measurable function on $\mathbb{R}_+$, and that
there exists an admissible function $w$ such that
\begin{equation}\label{eq:psi-w-relation}
\psi(r)
=
c_{k,n}\,r^{2-n}
\int_0^{r}
s^{n-k-1}
w(s)\,
(r^2-s^2)^{\frac{k}{2}-1}
\,ds,
\qquad
c_{k,n}=\frac{\sigma_{k-1}\sigma_{n-k-1}}{\sigma_{n-1}},
\end{equation}
Assume that $\psi$ admits an integrable decreasing majorant.
Define the smoothened dual $k$-plane transform $W_t^*$ by \eqref{eq:W-star-def}.
Then
the inversion formula
\begin{equation}\label{eq:dt-over-t-reconstruction}
\lim_{t \downarrow 0}
W_t^{*}\hat f(x)
=
c\,f(x)
\end{equation}
holds,
where
\[
c
=
\int_{{\mathbb R}^n} \psi(|x|)\,dx.
\]
The convergence holds in the topology of $X$ and almost everywhere.
\end{theorem}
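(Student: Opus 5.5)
The plan is to reduce the composition $W_t^*\hat f$ to a radial convolution with an $L^1$ approximate identity, and then to use the structure of $X$.

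\textbf{Step 1: the convolution identity.} For $f\in C_{\rm c}(\mathbb{R}^n)$, and more generally for $f\in L^1+L^p$ with $p<\frac nk$, write $\tau$ in the parametrization of Section~\ref{s2} and apply Fubini. This should give
\[
W_t^*\hat f(x)=\int_{\mathbb{R}^n} f(y)\,t^{-n}\Psi\Bigl(\frac{|x-y|}{t}\Bigr)\,dy=(f*\psi_t)(x),
\qquad \psi_t(z)=t^{-n}\psi(|z|/t).
\]
The kernel $\Psi$ is computed by a standard integral-geometric formula: the average over $\mathcal G_{n,k}$ of $w(|x-\tau|)$ over planes through $y$. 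Integrating over the Stiefel or Grassmannian directions and using polar coordinates in $\tau^\perp$ and in $\tau$ should produce exactly the Abel-type integral \eqref{eq:psi-w-relation}, so that $\Psi=\psi$. The constant $c_{k,n}$ comes from the ratio of sphere areas.

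To justify Fubini, I would use absolute integrability. Replace $w$ by $|w|$; condition \eqref{eq:260118-1} together with the decay \eqref{eq:radial-decay-condition} with $\beta>k$ gives a kernel comparable to $|x-y|^{k-n}$ near the diagonal and with decay at infinity. For $f\in L^1+L^p$ with $p<n/k$, the Riesz-potential-type bound (Solmon's estimate) then shows that the integral converges for a.e.\ $x$. Strictly speaking, the absolute version of $\hat f$ is finite a.e.

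\textbf{Step 2: $\psi\in L^1$ and the value of $c$.} The cancellation conditions \eqref{eq:radial-moment-conditions} guarantee that $\psi(r)$ decays fast enough as $r\to\infty$. Expanding $(r^2-s^2)^{k/2-1}$ in powers of $s^2/r^2$ kills the leading terms, and the $\beta>k$ moment controls the remainder. This is consistent with the hypothesis that $\psi$ has an integrable decreasing majorant $\Phi$, which I would simply use. Then $c=\int\psi(|x|)\,dx$ is finite, and $\psi_t$ is an approximate identity with total mass $c$.

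\textbf{Step 3: convergence.} The a.e.\ convergence $f*\psi_t\to cf$ follows from $|f*\psi_t|\le \|\Phi\|_{L^1}Mf$ and the Lebesgue differentiation theorem, since $f\in L^1+L^p\subset L^1_{\rm loc}$. For convergence in $X$, I would first establish a uniform bound
\[
\|f*\psi_t\|_X\le C\|f\|_X .
\]
A decreasing radial majorant is a superposition of normalized ball averages: $\Phi(|z|)=\int_0^\infty |B(0,r)|^{-1}\chi_{B(0,r)}(z)\,d\nu(r)$ with $\nu$ of total mass $\|\Phi\|_{L^1}$. Hence $|f*\psi_t|\le \int \fint_{B(\cdot,rt)}|f|\,d\nu(r)$. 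Stability under averaging, which follows from $X\in\mathcal A$, and the lattice property with Minkowski's inequality in $X$ then give the uniform bound. For $g\in C_{\rm c}$, $g*\psi_t\to cg$ uniformly, with tails controlled by $\Phi$. For the norm convergence I would dominate $|g*\psi_t-cg|$ by a compactly supported small part plus $C\,\Phi_t*|g|$ tails. Since $\chi_Q\in X$, the first piece is small in $X$. For the tails I need $\|\chi_{|x|>R}\Phi_t*|g|\|_X\to0$ as $t\to0$; this follows from averaging bounds applied to $\Phi\chi_{|z|>R/t}$, whose $L^1$ mass tends to $0$. Density of $C_{\rm c}$ together with the uniform bound completes the argument by the usual $\varepsilon/3$ argument.

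\textbf{Main obstacle.} I expect the hardest step to be Step 1: the explicit evaluation of the Grassmannian integral yielding \eqref{eq:psi-w-relation}, together with justifying Fubini for $f$ merely in $X\subset L^1+L^p$ when $w$ is not positive. I would handle this by proving the identity for $|w|$ first, which gives finiteness, and then applying dominated convergence.
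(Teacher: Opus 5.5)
Your plan follows the paper's route. The paper also gets $W_t^*\hat f(x)=t^{-n}\int f(y)\,\psi(|x-y|/t)\,dy$ from a scaling check plus Rubin's identity (Lemma~\ref{lem:convolution-identities}(1)). It then treats $\psi_t$ as an approximate identity in $X$, using the layer-cake reduction to ball averages, stability under averaging and density (Lemma~\ref{lem:260119-11}).

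Your Step~3 is more careful than the paper. The paper applies Lemma~\ref{lem:260119-11}, which is stated only for nonnegative radially decreasing kernels, directly to the signed $\psi$. Your bound $|f*\psi_t|\le \Phi_t*|f|$, together with the tail estimate for $g\in C_{\rm c}$, is what actually transfers the result from the majorant to $\psi$. One small fix: the truncated kernel $\Phi\chi_{\{|z|>\rho\}}$ is not decreasing. Apply the averaging bound instead to the part of your layer-cake measure $\nu$ on $(\rho,\infty)$; it dominates the truncated kernel and has mass $\nu((\rho,\infty))\to0$.

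The step that fails as written is the Fubini justification in Step~1 when $k=1$. With $|w|$ in place of $w$ the kernel is
\[
\Psi_+(r)=c_{1,n}\,r^{2-n}\int_0^r s^{n-2}\,|w(s)|\,(r^2-s^2)^{-1/2}\,ds .
\]
The factor $(r^2-s^2)^{-1/2}$ can combine with an integrable singularity of $w$. If $|w(s)|\approx|s-s_0|^{-1/2}$ near some $s_0>0$, which \eqref{eq:260118-1} allows, then $\Psi_+(s_0)=\infty$. Such radii can also accumulate at $0$ and at $\infty$. So there is no pointwise bound $\Psi_+(r)\lesssim r^{1-n}$, and you cannot dominate $\int|f(y)|\,t^{-n}\Psi_+(|x-y|/t)\,dy$ by the Riesz potential $I_1|f|(x)$. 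For $k\ge2$ your claim is correct, since $(r^2-s^2)^{k/2-1}\le r^{k-2}$ gives $\Psi_+(r)\lesssim r^{k-n}\int_0^\infty s^{n-k-1}|w(s)|\,ds$.

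For $k=1$ only averaged bounds hold, which is why the paper proves Lemmas~\ref{lem:ball-integral-estimate} and~\ref{lem:ball-integral-estimate1}. In Remark~\ref{rem 20260306-1}(2) it handles the two regions separately:
\begin{itemize}
\item On $\{|x-y|\ge 2s\}$ it keeps the $I_1|f|$ bound, since there $(|x-y|^2-s^2)^{-1/2}\le 2|x-y|^{-1}$.
\item On $\{s\le|x-y|<2s\}$ it integrates in $x$ over a unit ball $B$, using the $\beta$-moment of $w$ when $s>2$. This gives a weight $\lesssim (1+|y-c_B|)^{1-n}$, which is integrable against $|f|$ for $f\in L^1+L^p$, $p<n$.
\end{itemize}
Replacing your pointwise Riesz-potential domination with this Tonelli-over-balls estimate closes the gap, and the rest of your argument goes through.
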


A typical class of examples
in ${\mathcal A}$ is the one of Banach function spaces on which
the Hardy--Littlewood maximal operator is bounded.
Recall that the \emph{Hardy--Littlewood maximal operator} $M$ is defined by
\[
Mf(x)
:=
\sup_{\substack{\text{cubes } Q \ni x}}
\fint_{Q} |f(y)|\,dy,
\qquad x \in \mathbb{R}^n,
\]
for any measurable function $f$ on $\mathbb{R}^n$.

As shown in \cite{Nieraeth26,Vinogradov2024}, if $M$ is bounded on $X$, then
$X \in \mathcal A$ and
\begin{equation}\label{eq:260206-22}
[X]_{\mathcal A} \le c_0 \|M\|_{X \to X},
\end{equation}
where $c_0>0$ is an absolute constant.
Moreover, a simple geometric argument yields
\begin{equation}\label{eq:260206-21}
\left\|
\fint_{B(\cdot,r)} f(y)\,dy
\right\|_{X}
\le
c_n [X]_{\mathcal A}\,\|f\|_{X},
\qquad f \in X,
\end{equation}
where $B(x,r)$ denotes the Euclidean ball of radius $r>0$ centered at $x$,
and $c_n>0$ depends only on the dimension $n$.

Additional examples beyond the Banach lattices discussed above are presented in Sections~\ref{s4} and~\ref{s5}. 
We note, however, that the converse implication may fail, as demonstrated by the example 
\( X = L^1(\mathbb{R}^n) \).

Another counterexample can be obtained as follows: 
If the converse were true, then the maximal operator \( M \) would be bounded on both \( X \) and its associate space \( X' \). 
However, this contradicts an example involving a weighted Morrey space due to Tanaka~\cite[Proposition~4.2]{Tanaka2015}.

The density assumption in Theorem~\ref{thm:wavelet-reconstruction} is of a technical nature. 
Suppose that \( X \) does not satisfy this condition. 
In that case, we denote by \( \widetilde{X} \) the completion of \( C^\infty_{\mathrm c}(\mathbb{R}^n) \) in the Banach space \( X \). 
Observe that convolution operators with radially decreasing kernels preserve \( \widetilde{X} \). 
Therefore, Theorem~\ref{thm:wavelet-reconstruction} can be applied to \( \widetilde{X} \) instead.
Theorem~\ref{thm:wavelet-reconstruction} constitutes the core of the
convolution--backprojection method.

Instead of passing to the limit in
Theorem~\ref{thm:wavelet-reconstruction} as $t\downarrow 0$,
one may integrate with respect to the Haar measure
$dt/t$ over $(0,\infty)$.
 This formalism leads to the following:
\begin{theorem}\label{thm:wavelet-reconstruction1}
Let $X$ be a function space
as in Theorem \ref{thm:wavelet-reconstruction}.
Assume that $\psi$ is a measurable function on $\mathbb{R}_+$, and that
there exists an admissible function $w$ such that
\eqref{eq:psi-w-relation} is satisfied.
We write
\begin{align}
\label{eq:260128-1}
\tilde w(s)
&=s^{\frac{n-k}{2}-1}w(\sqrt{s}),\\
\label{eq:260129-1}
\lambda(s)
&=\frac{1}{s\Gamma(\frac12k+1)}\int_0^{s^2}(s^2-r)^{\frac{k}{2}}\tilde w(r)\,dr.
\end{align}
Set
\begin{equation}\label{eq:260119-26}
\tilde\psi(x)
=
\int_{1}^{\infty}
\psi\!\left(\frac{|x|}{t}\right)
\,\frac{dt}{t^{1+n}} \quad (x \in {\mathbb R}^n).
\end{equation}
Assume that $\tilde\psi$ admits an integrable decreasing majorant.
Define the operator $W_t^*$ by \eqref{eq:W-star-def}.
Then
the inversion formula
\begin{equation}\label{eq:dt-over-t-reconstruction}
\int_0^{\infty}
W_t^{*}\hat f(x)\,\frac{dt}{t}
=
\lim_{\varepsilon \downarrow 0}
\int_{\varepsilon}^{\infty}
W_t^{*}\hat f(x)\,\frac{dt}{t}
=
c\,f(x)
\end{equation}
holds,
where
\begin{equation}\label{eq:260206-11}
c
=
\frac{\pi^{\frac{n}{2}}}{\Gamma\left(\frac{n-k}{2}\right)} \int_0^\infty \lambda(s)ds.
\end{equation}
The convergence holds in the topology of $X$ and almost everywhere.
\end{theorem}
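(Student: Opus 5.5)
The plan is to reduce Theorem~\ref{thm:wavelet-reconstruction1} to the approximate-identity mechanism behind Theorem~\ref{thm:wavelet-reconstruction}. The starting point is the identity, already implicit in Theorem~\ref{thm:wavelet-reconstruction}, that for $f$ in $L^1+L^p$ with $1\le p<\frac nk$ we have $W_t^*\hat f=f*\psi_t$, where $\psi_t(x)=t^{-n}\psi(|x|/t)$ and $\psi$ is given by \eqref{eq:psi-w-relation}. This comes from writing $\int_{\mathcal G_{n,k}}\hat f(\tau)w(|x-\tau|/t)\,d\mu(\tau)$ as an integral over $\mathbb R^n$, using the Fubini-type formula for the dual $k$-plane transform of a radial function of $|x-\tau|$; that formula is exactly what produces the Abel-type integral with the kernel $(r^2-s^2)^{k/2-1}$. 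Integrating in $dt/t$ over $(\varepsilon,\infty)$ and changing variables $t=\varepsilon u$ gives
\[
\int_\varepsilon^\infty W_t^*\hat f(x)\,\frac{dt}{t}
=\int_{\mathbb R^n}f(x-y)\int_\varepsilon^\infty\psi\!\left(\frac{|y|}{t}\right)\frac{dt}{t^{1+n}}\,dy
=f*\tilde\psi_\varepsilon(x),
\qquad \tilde\psi_\varepsilon(x)=\varepsilon^{-n}\tilde\psi(x/\varepsilon).
\]
The interchange of the $t$- and $y$-integrals is justified by Fubini once absolute convergence is checked. For that I will use the decay \eqref{eq:radial-decay-condition} and the local condition \eqref{eq:260118-1}, together with the assumed integrable decreasing majorant of $\tilde\psi$. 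I would first verify this on $C_{\rm c}$ functions and then extend.

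With this representation, convergence is the standard approximate-identity argument, mirroring the proof of Theorem~\ref{thm:wavelet-reconstruction} with $\tilde\psi$ in place of $\psi$. Let $\Phi$ be the integrable radially decreasing majorant of $\tilde\psi$. Then $|f*\tilde\psi_\varepsilon|\le \|\Phi\|_{L^1}\,\sup_{r>0}\bigl|\fint_{B(\cdot,r)}|f|\bigr|$, pointwise in the layer-cake sense. Combining this with \eqref{eq:260206-21} and the $\mathcal A$ condition gives the uniform bound $\|f*\tilde\psi_\varepsilon\|_X\le C\|f\|_X$, because a radially decreasing kernel is a superposition of normalized ball averages. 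For $f\in C_{\rm c}$ we have $f*\tilde\psi_\varepsilon\to(\int\tilde\psi)f$ uniformly with uniform decay. By the inclusion into $L^1+L^p$, the lattice property and the uniform bound, this also gives convergence in $X$. Density then yields $X$-convergence for every $f\in X$. The almost-everywhere convergence follows from the inclusion $X\subset L^1+L^p$ and the classical fact that the maximal function $\sup_\varepsilon|f*\tilde\psi_\varepsilon|\le\|\Phi\|_1Mf$ is weak-type on $L^1+L^p$; together with density in $L^1+L^p$, this gives convergence at Lebesgue points.

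It remains to identify $c=\int_{\mathbb R^n}\tilde\psi(x)\,dx$ with \eqref{eq:260206-11}, and this is the step I expect to be the main obstacle. Formally, $\int\tilde\psi=\int_{\mathbb R^n}\int_1^\infty\psi(|x|/t)t^{-1-n}\,dt\,dx=\sigma_{n-1}\int_0^\infty\psi(\rho)\rho^{n-1}\min\{\ldots\}$. More precisely, substituting $x=t z$ gives $\sigma_{n-1}\int_0^\infty\psi(\rho)\rho^{n-1}\log^+(\cdot)$-type weights, so I would rather compute radially: $\int\tilde\psi=\sigma_{n-1}\int_0^\infty r^{n-1}\int_1^\infty\psi(r/t)t^{-1-n}\,dt\,dr$. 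I will insert \eqref{eq:psi-w-relation} and substitute $s^2\mapsto r$, which turns $w$ into $\tilde w$ from \eqref{eq:260128-1}. The inner Abel integral then combines, after one integration in $r$ (raising the exponent $\frac k2-1$ to $\frac k2$ and producing $\Gamma(\frac k2+1)$), into $\lambda(s)$ from \eqref{eq:260129-1}. The $t$-integral produces the factor $1/s$, and the constants $c_{k,n}\sigma_{n-1}$ combine with the beta-function factors into $\pi^{n/2}/\Gamma(\frac{n-k}2)$. The delicate point is that each individual integral is only conditionally convergent. I will therefore use the cancellation conditions \eqref{eq:radial-moment-conditions} for $j\le 2[\frac k2]$ to show that $\lambda(s)$ decays at infinity: expand $(s^2-r)^{k/2}$ and subtract the vanishing moments. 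The decay \eqref{eq:radial-decay-condition} with $\beta>k$ then makes $\lambda\in L^1(0,\infty)$, which legitimizes the Fubini swaps in the correct order.
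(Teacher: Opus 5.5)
Your proposal follows the paper's proof essentially step by step: the representation $\int_\varepsilon^\infty W_t^{*}\hat f\,\frac{dt}{t}=f*\tilde\psi_\varepsilon$, the approximate-identity argument via the layer-cake Lemma~\ref{lem:260119-11}, and the evaluation of $\int\tilde\psi$ through $r=s^2$ and one Abel integration, which the paper records as the pointwise identity $\tilde\psi(x)=c_1|x|^{1-n}\lambda(|x|)$; note that $X$-convergence for $f\in C_{\rm c}(\mathbb R^n)$ comes from that lemma's reduction to finitely many compactly supported ball averages, not from the inclusion $X\subset L^1+L^p$, which points the wrong way. Two simplifications: the interchange of the $t$- and $y$-integrals is justified directly for $f\in L^1+L^p$ by the bound $\int_\varepsilon^\infty|\psi(|x|/t)|\,t^{-n-1}\,dt\lesssim C_\varepsilon|x|^{k-n}$ of Lemma~\ref{lem:260119-81}(2) together with $I_k|f|<\infty$ a.e.\ (the majorant of $\tilde\psi$ does not control this absolute integral, and passing from $C_{\rm c}$ to general $f$ would itself require that bound), and since the pointwise identity involves only absolutely convergent integrals over $0<s<u<|x|$, the majorant hypothesis gives $\lambda\in L^1(0,\infty)$ at once, so the cancellation conditions are not needed here and enter only in the explicit formula \eqref{eq:constant-c-explicit}.
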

Theorem \ref{thm:wavelet-reconstruction1} extends
\cite[Theorem 3.2]{Rubin2004}.

The function $\psi$, which realizes approximation to the identity,
can be chosen arbitrarily within the assumptions in
Theorem \ref{thm:wavelet-reconstruction1}.
We also recall the definition of the Riemann--Liouville fractional integral.
For a complex number $\alpha$ satisfying
$\Re \alpha > 0$ and a suitable function $g$, it is given by
\begin{equation}\label{eq:RL-fractional-integral}
I_+^\alpha g(t)
=
\frac{1}{\Gamma(\alpha)}
\int_0^t g(r)\,(t-r)^{\alpha-1}\,dr,
\qquad t>0.
\end{equation}
The generating function $w$ is then uniquely determined by $\psi$
as a solution of the Abel-type integral equation
\eqref{eq:psi-w-relation}.

 For later considerations, we recall an identity
  assuming that $\alpha$ in \cite[Lemma~2.4]{Rubin1998} is a real number:
\begin{lemma}{\rm \cite[Lemma~2.4]{Rubin1998}}\label{lem:260307-1}
Let $\beta>\alpha>0$, and assume
\begin{align*}
\int_0^\infty t^j \psi(t)\, dt &= 0 \quad \text{for all } j = 0,1,\ldots, [\alpha], \\
\int_0^\infty t^\beta |\psi(t)|\, dt &<\infty.
\end{align*}
Define
\[
\gamma = \alpha - \min([1+\alpha], \beta) < 0.
\]
Then
\[
I_+^{1+\alpha} \psi(s) =
\begin{cases}
O(s^\alpha), & 0 < s \le 1,\\
O(s^\gamma), & s \ge 1.
\end{cases}
\]
 Furthermore,
\[
\int_0^\infty I_+^{1+\alpha} \psi(s)\, ds =
\begin{cases}
\Gamma(-\alpha) \displaystyle \int_0^\infty s^\alpha \psi(s)\, ds, & \alpha \notin \mathbb{N},\\[1ex]
\displaystyle \frac{(-1)^{\alpha+1}}{\alpha!} \int_0^\infty s^\alpha \psi(s) \log s\, ds, & \alpha \in \mathbb{N}.
\end{cases}
\]
\end{lemma}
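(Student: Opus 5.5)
We prove Lemma~\ref{lem:260307-1} directly from the definition \eqref{eq:RL-fractional-integral} of $I_+^{1+\alpha}$:
\[
I_+^{1+\alpha}\psi(s)=\frac{1}{\Gamma(1+\alpha)}\int_0^s \psi(t)(s-t)^{\alpha}\,dt .
\]
We treat the two regimes of $s$ separately and then compute the integral by analytic continuation in the exponent. Throughout, $\psi\in L^1(0,\infty)$, which follows from the vanishing moment with $j=0$ being meaningful together with $\int_0^\infty t^\beta|\psi(t)|\,dt<\infty$.

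\emph{Estimates.} For $0<s\le 1$ we use $(s-t)^\alpha\le s^\alpha$ to get $|I_+^{1+\alpha}\psi(s)|\le s^\alpha\|\psi\|_{L^1}/\Gamma(1+\alpha)$. For $s\ge1$ we use the cancellation. Set $m=\min([1+\alpha],\beta)$ and $N=[\alpha]$, so that the moments of order $0,\dots,N$ vanish. Since $\int_0^\infty t^j\psi(t)\,dt=0$ for $j\le N$, we may write
\[
\Gamma(1+\alpha)I_+^{1+\alpha}\psi(s)=\int_0^{s/2}\psi(t)\Bigl[(s-t)^\alpha-\sum_{j\le N}\tbinom{\alpha}{j}s^{\alpha-j}(-t)^j\Bigr]dt
-\sum_{j\le N}\tbinom{\alpha}{j}s^{\alpha-j}(-1)^j\!\int_{s/2}^\infty t^j\psi(t)\,dt+\int_{s/2}^s\psi(t)(s-t)^\alpha\,dt .
\]
We bound the three pieces as follows.
\begin{itemize}
\item In the first piece, Taylor's remainder gives a bracket of size $O(s^{\alpha-N-1}t^{N+1})$. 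We interpolate this with the trivial bound $O(s^{\alpha-j}t^j)$ to get $s^{\alpha-m}t^{m}$ with $m\le\beta$, and $t^m\lesssim t^\beta$ for $t\ge1$. This gives $O(s^{\gamma})$ after splitting $t<1$ and $t\ge 1$.
\item In the second piece, $t^j\le t^\beta s^{j-\beta}$ on $t>s/2$, which gives $O(s^{\alpha-\beta})$.
\item In the third piece, $(s-t)^\alpha\le s^\alpha\le C s^{\alpha-\beta}t^\beta$.
\end{itemize}
Since $\alpha-\beta\le\gamma$, all three pieces are $O(s^\gamma)$. The one delicate point is when $m=[1+\alpha]$ exceeds $\beta$ or when $N+1>\beta$. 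This is exactly why $m$ is the minimum, and we handle it by using only the order-$\beta$ interpolated bound.

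\emph{Integral formula.} Since $\gamma<0$ only guarantees $\gamma<-1$ when $m-\alpha>1$, we do not integrate naively. Instead we consider the Mellin-type function
\[
F(z)=\int_0^\infty s^{z}\,I_+^{1+\alpha}\psi(s)\,ds .
\]
For $-1-\alpha<\Re z<-1-\gamma$ this converges by the estimates above. By Fubini, for $\Re z$ in a smaller strip where the interchange of integrals is absolute (e.g.\ $-1-\alpha<\Re z<-1-\alpha+\epsilon$, using $\int_t^\infty s^{z}(s-t)^\alpha\,ds=t^{z+\alpha+1}B(-z-\alpha-1,\alpha+1)$), we get
\[
F(z)=\frac{\Gamma(-z-\alpha-1)}{\Gamma(-z)}\int_0^\infty t^{z+\alpha+1}\psi(t)\,dt .
\]
Both sides are analytic in $z$ on the common strip, since the moment integral converges for $-1\le\Re(z+\alpha+1)\le\beta$. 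By analytic continuation the identity persists up to $z=0$. At $z=0$ we have $1/\Gamma(-z)\sim -z$, and:
\begin{itemize}
\item if $\alpha\notin\mathbb N$, $\Gamma(-\alpha-1-z)$ is regular at $z=0$. The product is $0\cdot$ (finite), which looks wrong, so the bookkeeping must be redone: in fact the moment $\int t^{\alpha+1+z}\psi$ is the wrong object. We should use $I_+^{1+\alpha}$ with kernel $(s-t)^\alpha$ and compute $\int_t^\infty s^z(s-t)^\alpha ds = t^{z+\alpha+1}\Gamma(\alpha+1)\Gamma(-z-\alpha-1)/\Gamma(-z)$. Dividing by $\Gamma(1+\alpha)$ and letting $z\to0$ gives a ratio that is finite only after combining with the vanishing moments.
\end{itemize}
The main obstacle is getting this continuation step right. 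The cleanest route is to take $z=0$ directly but integrate over $s$ first, on a truncated range $s<R$, after subtracting the Taylor polynomial. This yields $\Gamma(-\alpha)\int t^\alpha\psi$ via the standard identity $\int_0^\infty[(1-u)_+^\alpha-\text{Taylor part}]\,du$ type Beta integrals regularized by the vanishing moments. In the integer case the two Gamma poles collide, and the limit of a difference quotient produces the $\log t$ moment with coefficient $(-1)^{\alpha+1}/\alpha!$, which is the residue of $\Gamma$ at $-\alpha$.
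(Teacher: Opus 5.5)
You prove the pointwise estimates correctly. Splitting at $s/2$ and interpolating the order-$[\alpha]$ Taylor remainder of $(s-t)^\alpha$ against the trivial bound gives $O(s^\alpha)$ on $(0,1]$ and $O(s^\gamma)$ on $[1,\infty)$; you implicitly use $\psi\in L^1$, which is harmless. The gap is the integral identity, and what you hit at $z=0$ is not a bookkeeping slip: with the measure $ds$ the identity is false as printed. The paper gives no proof of its own; it only quotes Rubin. For $\alpha\notin\mathbb N$ and compactly supported $\psi$ with $\int_0^\infty t^{[\alpha]+1}\psi\neq 0$, one has $I_+^{1+\alpha}\psi(s)\sim c\,s^{\alpha-[\alpha]-1}$ with $c\neq 0$ and $\alpha-[\alpha]-1\in(-1,0)$, so $\int_0^\infty I_+^{1+\alpha}\psi(s)\,ds$ diverges. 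For $\alpha=1$ and $\psi=\chi_{[0,1]}-2\chi_{[1,2]}+\chi_{[2,3]}$, which satisfies all the hypotheses, $\int_0^\infty I_+^{2}\psi(s)\,ds=\frac12\int_0^\infty t^2\psi(t)\,dt=1$. The claimed value is $\int_0^\infty s\psi(s)\log s\,ds=\frac92\log 3-6\log 2$, and this is exactly $\int_0^\infty I_+^{2}\psi(s)\,\frac{ds}{s}$. So the intended statement has $\frac{ds}{s}$, and that is how the paper actually uses it when it rewrites $\int_0^\infty\lambda(s)\,ds$ as $\frac12\int_0^\infty I_+^{k/2+1}[\tilde w](t)\,\frac{dt}{t}$. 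In your notation this is $z=-1$, not $z=0$. At $z=-1$ you get $\Gamma(-z-\alpha-1)/\Gamma(-z)=\Gamma(-\alpha)$ and the moment $\int_0^\infty t^\alpha\psi$. Moreover $z=-1$ lies in your strip $-1-\alpha<\Re z<-1-\gamma$ precisely because $\gamma<0$, which is what that hypothesis is for. Your closing sketch, which tries to extract $\Gamma(-\alpha)\int t^\alpha\psi$ from an integral against $ds$, cannot be carried out.

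Even at $z=-1$, your continuation argument has no valid starting strip. The Beta evaluation of $\int_t^\infty s^{z}(s-t)^\alpha\,ds$ requires $\Re z<-1-\alpha$, so your strip $-1-\alpha<\Re z<-1-\alpha+\epsilon$ is on the wrong side. For such $z$, the hypotheses control neither $\int_0^1 s^{z}I_+^{1+\alpha}\psi(s)\,ds$ nor $\int_0^1 t^{z+\alpha+1}\psi(t)\,dt$; the moment integral is only known to converge for $0\le\Re(z+\alpha+1)\le\beta$. The missing idea is to use the vanishing moments \emph{before} interchanging integrals. Write $\Gamma(1+\alpha)I_+^{1+\alpha}\psi(s)=\int_0^\infty\psi(t)\,s^\alpha\kappa(t/s)\,dt$ with $\kappa(u)=(1-u)_+^\alpha-\sum_{j=0}^{[\alpha]}\binom{\alpha}{j}(-u)^j$. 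Then $\int_0^\infty s^\alpha\kappa(t/s)\,\frac{ds}{s}=t^\alpha\int_0^\infty u^{-\alpha-1}\kappa(u)\,du=\Gamma(1+\alpha)\Gamma(-\alpha)\,t^\alpha$, the analytically continued value of $B(-\alpha,1+\alpha)$. For $\alpha\notin\mathbb N$ this $u$-integral converges absolutely, since $\kappa=O(u^{[\alpha]+1})$ at $0$ and $\kappa=O(u^{[\alpha]})$ at $\infty$. Fubini is then justified by $\int_0^\infty t^\alpha|\psi(t)|\,dt<\infty$, and the first formula follows. For $\alpha=N\in\mathbb N$ this kernel is logarithmically divergent at $u=\infty$. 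Instead, use $N!\,I_+^{1+N}\psi(s)=-\int_s^\infty\psi(t)(s-t)^N\,dt$, which again follows from the vanishing moments. Integrate over $s\in(\varepsilon,\infty)$ against $ds/s$ and expand $\int_\varepsilon^t(s-t)^N\,\frac{ds}{s}$. The term $(-t)^N\log(t/\varepsilon)$, together with $\int_0^\infty t^N\psi=0$, yields $\frac{(-1)^{N+1}}{N!}\int_0^\infty t^N\psi(t)\log t\,dt$ as $\varepsilon\downarrow 0$, and the remaining terms vanish in the limit.
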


In the course of the proof of
Theorem \ref{thm:wavelet-reconstruction1}
we will justify that the composition
$W_t^{*}\hat f(x)$ makes sense for almost all
$x \in {\mathbb R}^n$
as long as $f \in X$.

If $w$ satisfies
\begin{equation}\label{eq:moment-conditions}
\int_{0}^{\infty}
s^{j+n-k-1}\, w(s)\,ds = 0,
\qquad
j = 0,2,4,\dots,2\left[\frac{k}{2}\right],
\end{equation}
 and 
\begin{equation}\label{eq:decay-condition}
\int_{1}^{\infty}
s^{\beta+n-k-1}\,|w(s)|\,ds < \infty,
\end{equation} then, by the work of
Rubin, the constant $c$ appearing in 
\eqref{eq:dt-over-t-reconstruction} and given by 
\eqref{eq:260206-11} admits the following explicit representation:
\begin{equation}\label{eq:constant-c-explicit}
c
=
\frac{\pi^{\frac{n}{2}}}{\Gamma\left(\frac{n-k}{2}\right)}  
\times 
\begin{cases}
\displaystyle
\Gamma\left(-\frac{k}{2}\right)
\int_{0}^{\infty} s^{n-1}\, w(s)\,ds,
& \text{if $k$ is odd}, \\[3ex]
\displaystyle
\frac{2(-1)^{1+\frac{k}{2}}}{ \left( \frac{k}{2} \right)!}
\int_{0}^{\infty} s^{n-1}\, w(s)\,\log s\,ds,
& \text{if $k$ is even},
\end{cases}
\end{equation} 
See \cite[Lemma~2.4]{Rubin1998}.
See also the paper by Saeki for an account of why $\log$ appears here
\cite{Saeki1995}.
Consequently, we obtain the following corollary:

\begin{corollary}\label{cor:inversion-constant}
Suppose that $X$ is a Banach lattice  as in Theorem \ref{thm:wavelet-reconstruction}.
Let $f \in X$.
If the function $w$ satisfies \eqref{eq:moment-conditions} and
\eqref{eq:decay-condition}, then the inversion formula
\eqref{eq:dt-over-t-reconstruction} holds with the constant $c$ given by
\eqref{eq:260206-11} and \eqref{eq:constant-c-explicit}.
\end{corollary}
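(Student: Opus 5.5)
The plan is to derive the corollary from Theorem~\ref{thm:wavelet-reconstruction1}, which already gives the inversion formula \eqref{eq:dt-over-t-reconstruction} with constant \eqref{eq:260206-11}. What remains is to evaluate $\int_0^\infty\lambda(s)\,ds$ in closed form using Lemma~\ref{lem:260307-1}. Note that \eqref{eq:moment-conditions} and \eqref{eq:decay-condition} are exactly \eqref{eq:radial-moment-conditions} and \eqref{eq:radial-decay-condition}, so $w$ is admissible. I will accept that the hypotheses of Theorem~\ref{thm:wavelet-reconstruction1} on $\tilde\psi$ hold, since they are part of the standing setting.

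\textbf{Step 1: rewrite $\lambda$ as a fractional integral.} By \eqref{eq:RL-fractional-integral},
\[
\int_0^{s^2}(s^2-r)^{k/2}\tilde w(r)\,dr=\Gamma\bigl(\tfrac k2+1\bigr)\,I_+^{1+k/2}\tilde w(s^2),
\]
so $\lambda(s)=s^{-1}I_+^{1+k/2}\tilde w(s^2)$. Substituting $u=s^2$, so that $ds/s=du/(2u)$, gives
\[
\int_0^\infty\lambda(s)\,ds=\frac12\int_0^\infty u^{-1/2}\,I_+^{1+k/2}\tilde w(u)\,du .
\]
This is not yet of the form $\int_0^\infty I_+^{1+\alpha}\psi$ required by Lemma~\ref{lem:260307-1}. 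I will recheck the normalization here, since the definitions of $\tilde w$ and $\lambda$ may be set up so that a different substitution, for instance $u=s$ with $\alpha=k/2$, produces exactly that form. The target formula \eqref{eq:constant-c-explicit} guides the check. The moments of $\tilde w$ are
\[
\int_0^\infty u^{j/2}\tilde w(u)\,du=2\int_0^\infty r^{j+n-k-1}w(r)\,dr,
\]
and the claimed answer involves $\int_0^\infty s^{n-1}w(s)\,ds=\tfrac12\int_0^\infty u^{k/2}\tilde w(u)\,du$. This is precisely the moment $\int_0^\infty u^{\alpha}\psi(u)\,du$ of Lemma~\ref{lem:260307-1} with $\psi=\tilde w$ and $\alpha=k/2$. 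So the intended identity is $\int_0^\infty\lambda(s)\,ds=\int_0^\infty I_+^{1+k/2}\tilde w(u)\,du$ up to the factor tracked above, and I will reconcile any discrepancy by returning to how $\lambda$ arose in the proof of Theorem~\ref{thm:wavelet-reconstruction1}.

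\textbf{Step 2: verify the hypotheses of Lemma~\ref{lem:260307-1}} for $\psi=\tilde w$, $\alpha=k/2$ and decay exponent $\beta/2$. The change of variables above shows the following.
\begin{itemize}
\item The moment conditions $\int_0^\infty u^{i}\tilde w(u)\,du=0$ for $i=0,\dots,[k/2]$ are exactly \eqref{eq:moment-conditions} with $j=2i$.
\item The decay condition $\int_1^\infty u^{\beta/2}|\tilde w(u)|\,du<\infty$ is exactly \eqref{eq:decay-condition}.
\item The requirement $\beta/2>\alpha$ is equivalent to $\beta>k$.
\item Local integrability near $0$ comes from \eqref{eq:260118-1}.
\end{itemize}
The $O$-bounds in the lemma also make $\lambda$ integrable, so $\int_0^\infty\lambda$ is finite.

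\textbf{Step 3: apply the second part of Lemma~\ref{lem:260307-1}.}
\begin{itemize}
\item For $k$ odd, $\alpha=k/2\notin\mathbb N$, and the lemma gives $\Gamma(-\tfrac k2)\int_0^\infty u^{k/2}\tilde w(u)\,du$. Converting back to $w$ yields the first line of \eqref{eq:constant-c-explicit}.
\item For $k$ even, the lemma gives $\frac{(-1)^{k/2+1}}{(k/2)!}\int_0^\infty u^{k/2}\tilde w(u)\log u\,du$. Substituting $u=r^2$ makes $\log u=2\log r$, and together with the Jacobian this produces the factor $2$ and $\int_0^\infty s^{n-1}w(s)\log s\,ds$, the second line of \eqref{eq:constant-c-explicit}. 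In this case the moment conditions are used to remove any additive constant in the logarithm.
\end{itemize}

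The main obstacle is the bookkeeping of constants and variable changes in Step 1: matching $\lambda$ exactly with $I_+^{1+k/2}\tilde w$ and tracking the Jacobian factors of $2$. Everything else is a direct invocation of Theorem~\ref{thm:wavelet-reconstruction1} and Lemma~\ref{lem:260307-1}.
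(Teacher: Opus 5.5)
Your route matches the paper's. The inversion formula and the prefactor $\frac{\pi^{n/2}}{\Gamma(\frac{n-k}{2})}\int_0^\infty\lambda(s)\,ds$ come from Theorem~\ref{thm:wavelet-reconstruction1}, and $\int_0^\infty\lambda$ is evaluated by Lemma~\ref{lem:260307-1} with $\psi=\tilde w$, $\alpha=k/2$ and decay exponent $\beta/2$. Your Steps~2 and~3 (moment and decay translations, the Jacobian factor $2$, and $\log(r^2)=2\log r$) agree with the paper. The gap is Step~1, the only step linking $\lambda$ to the lemma: you leave it open, and the resolution you anticipate cannot work.

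First, your substitution is miscomputed. Since $\lambda(s)=s^{-1}I_+^{1+k/2}\tilde w(s^2)$ and $ds/s=du/(2u)$, you get
\[
\int_0^\infty\lambda(s)\,ds=\frac12\int_0^\infty I_+^{1+k/2}\tilde w(u)\,\frac{du}{u},
\]
not a weight $u^{-1/2}$. Second, no other substitution turns this into a constant multiple of $\int_0^\infty I_+^{1+k/2}\tilde w(u)\,du$, because that integral diverges in general. Take $k=1$ and $\tilde w$ compactly supported with $\int_0^\infty\tilde w=0$ and $\int_0^\infty r\,\tilde w(r)\,dr\neq0$. Expanding $(u-r)^{1/2}$ gives $I_+^{3/2}\tilde w(u)\sim-\frac{1}{2\Gamma(3/2)}\bigl(\int_0^\infty r\,\tilde w(r)\,dr\bigr)u^{-1/2}$ as $u\to\infty$, which is not integrable.

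The missing observation is that Rubin's identity concerns $\int_0^\infty I_+^{1+\alpha}\psi(s)\,\frac{ds}{s}$. The displayed statement of Lemma~\ref{lem:260307-1} has lost the factor $1/s$. The lemma's own bounds point to this: $O(s^\alpha)$ at $0$ and $O(s^\gamma)$ with $\gamma<0$ at $\infty$ make the $ds/s$ integral absolutely convergent, but do not control the $ds$ integral at infinity.

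A Mellin computation confirms it. For $g$ supported in a compact subset of $(0,\infty)$ and $\Re z<-\alpha$, Fubini and a Beta integral give $\int_0^\infty I_+^{1+\alpha}g(t)\,t^{z-1}\,dt=\frac{\Gamma(-\alpha-z)}{\Gamma(1-z)}\int_0^\infty g(r)\,r^{\alpha+z}\,dr$. Continuing analytically, the moment conditions $\int_0^\infty r^jg(r)\,dr=0$, $j\le[\alpha]$, cancel the poles at $z=j-\alpha$. At $z=0$, i.e.\ weight $t^{-1}$, the right-hand side becomes $\Gamma(-\alpha)\int_0^\infty r^\alpha g(r)\,dr$, which is exactly the lemma's right-hand side. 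When $\alpha\in\mathbb N$, the cancelled pole at $z=0$ produces the $\log$ version instead.

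This is what the paper does: it writes $\int_0^\infty\lambda(s)\,ds=\frac12\int_0^\infty I_+^{k/2+1}[\tilde w](t)\,\frac{dt}{t}$ and applies the lemma to that integral. With this correction, your Steps~2--3 complete the proof. Your remark that the moment conditions remove an additive constant in the logarithm is unnecessary, since $\log(r^2)=2\log r$ exactly.
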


One may interpret $W_t^{*}\varphi$ as the wavelet transform of $\varphi$
associated with the dual $k$-plane transform and generated by the wavelet function $w$.
Usually, the operator $f \mapsto W_t^*\hat{f}$ is decomposed into two operators{\rm:}
\[
W_t^* \hat{f} = V_t^* U_t f.
\]
Here $V_t^*$ and $U_t$ are described 
in Definition \ref{def:k-plane-ridgelet}
below{\rm:}
\begin{definition}[$k$-plane ridgelet-type transforms]\label{def:k-plane-ridgelet}
Fix an integer $k$ with $1 \le k < n$.
Let $u$
and $v$ be suitable (wavelet) functions on $[0,\infty)$.
For $x \in \mathbb{R}^n$ and $\tau \in {\mathcal G}_{n,k}$,
we write $|x-\tau|$ for the Euclidean distance from $x$ to $\tau$.
\begin{enumerate}
\item
Let $t>0$, and let $f \colon \mathbb{R}^n \to \mathbb{C}$ be a measurable function.
Define
the generalized projection operator
(smoothed 
$k$-plane Radon transform) by
\begin{equation}\label{eq:Ut}
U_t f(\tau)
=
t^{k-n}
\int_{\mathbb{R}^n}
f(x)\,
u\!\left(\frac{|x-\tau|}{t}\right)
\,dx,
\qquad \tau \in {\mathcal G}_{n,k}.
\end{equation}

\item
Let $t>0$, and let $\varphi \colon {\mathcal G}_{n,k} \to \mathbb{C}$ be a measurable function.
Define
the generalized backprojection operator
(smoothed dual 
$k$-plane operator) by
\begin{equation}\label{eq:Vt-star}
V_t^{*}\varphi(x)
=
t^{k-n}
\int_{{\mathcal G}_{n,k}}
\varphi(\tau)\,
v\!\left(\frac{|x-\tau|}{t}\right)
\,d\mu_{\mathcal{G}_{n,k}}(\tau),
\qquad x \in \mathbb{R}^n.
\end{equation}
\end{enumerate}
In \eqref{eq:Ut} and \eqref{eq:Vt-star},
we suppose that the integral on the right-hand side converges.
\end{definition}

In the limiting case $k=0$, the manifold ${\mathcal G}_{n,0}$ can be identified with
$\mathbb{R}^{n}$, and the transforms~\eqref{eq:Ut} and~\eqref{eq:Vt-star}
reduce to the classical continuous wavelet transforms on $\mathbb{R}^{n}$;
see, for example, \cite{FJW1991}.
As the following theorem shows, the function $f$ can be reconstructed
through an integral decomposition involving the operators $V_t^{*}$ and
$U_t$.

\begin{theorem}\label{Theorem 3.5}
Let $X$ be a Banach lattice satisfying the assumptions of
Theorem~\ref{thm:wavelet-reconstruction}.
Let $u, v : [0,\infty) \to \mathbb{R}$ be measurable functions such that
\[
\int_0^\infty s^{\,n-k-1} \bigl( |u(s)| + |v(s)| \bigr)\, ds < \infty .
\]
Identify $u$ and $v$ with radial functions on $\mathbb{R}^{\,n-k}$, so
that both are integrable over $\mathbb{R}^{\,n-k}$, and define
\[
w := u * v .
\]

Assume that $w$, viewed as a function on $(0,\infty)$, is admissible,
that is, it satisfies the moment conditions~\eqref{eq:moment-conditions}
and the decay condition~\eqref{eq:decay-condition}.
Let $c$ be the constant defined in~\eqref{eq:constant-c-explicit}.
Suppose furthermore that the function $\tilde{\psi}$, defined
in~\eqref{eq:260119-26}, admits an integrable decreasing majorant.

Then, for every $f \in X$,
\[
\int_0^\infty V_t^{*} U_t f(x)\, \frac{dt}{t^{\,1+k}}
=
\lim_{\varepsilon \to 0^+}
\int_\varepsilon^\infty V_t^{*} U_t f(x)\, \frac{dt}{t^{\,1+k}}
=
c\, f(x),
\]
with convergence both in $X$ and almost everywhere.
\end{theorem}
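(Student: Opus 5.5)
The plan is to reduce Theorem~\ref{Theorem 3.5} to Theorem~\ref{thm:wavelet-reconstruction1}. The reduction rests on a factorization identity
\[
V_t^{*}U_t f(x) = t^{k}\, W_t^{*}\hat f(x),
\]
where $W_t^*$ is built from $w=u*v$. Once this identity holds for almost every $x$, the weight $dt/t^{1+k}$ turns into $dt/t$. Theorem~\ref{thm:wavelet-reconstruction1} then gives convergence in $X$ and almost everywhere, with the constant $c$ of \eqref{eq:260206-11}. Under the moment and decay conditions, Corollary~\ref{cor:inversion-constant} identifies this constant with \eqref{eq:constant-c-explicit}.

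\emph{Step 1: the identity for nice $f$.} First take $f\in C_{\rm c}(\mathbb R^n)$ and $u,v$ as in the hypothesis. Parametrize $\tau\in\mathcal G_{n,k}$ as $\tau=\xi+y$, with $\xi\in G_{n,k}$ a $k$-dimensional subspace and $y\in\xi^\perp\cong\mathbb R^{n-k}$. Then $d\mu_{\mathcal G_{n,k}}=d\xi\,dy$, and $|x-\tau|=|\Pr_{\xi^\perp}x-y|$. Writing $x=x'+x''$ with $x'\in\xi$ and $x''\in\xi^\perp$, Fubini gives
\[
U_tf(\xi+y)=t^{k-n}\int_{\xi^\perp}\hat f(\xi+z)\,u\!\left(\frac{|z-y|}{t}\right)dz
=\hat f(\xi+\cdot)*u_t(y),
\]
where $u_t(y)=t^{k-n}u(|y|/t)$ is the $L^1(\mathbb R^{n-k})$-normalized dilate. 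The same computation gives
\[
V_t^*\varphi(x)=\int_{G_{n,k}}(\varphi(\xi+\cdot)*v_t)(\Pr_{\xi^\perp}x)\,d\xi .
\]
Combining the two and using associativity of convolution on $\xi^\perp$, together with $(u*v)_t=u_t*v_t$ (both are radial, so the reflection is harmless), we get
\[
V_t^*U_tf(x)=\int_{G_{n,k}}(\hat f(\xi+\cdot)*w_t)(\Pr_{\xi^\perp}x)\,d\xi .
\]
Since $w_t(y)=t^{k-n}w(|y|/t)$ and \eqref{eq:W-star-def} carries the factor $t^{-n}$, the right-hand side equals $t^{k}W_t^*\hat f(x)$. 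Absolute convergence, needed for Fubini, follows from the integrability of $s^{n-k-1}(|u|+|v|)$ and Young's inequality on each fibre $\xi^\perp$.

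\emph{Step 2: extension to all $f\in X$.} This is the main obstacle: we need $U_tf$ to be defined and the identity to persist for general $f\in X$. Split $f=f_1+f_p$ with $f_1\in L^1$ and $f_p\in L^p$, $p<n/k$. By Solmon's result, $\hat f_j(\xi+\cdot)\in L^1_{\rm loc}$ of the fibre for almost every $\xi$. The proof of Theorem~\ref{thm:wavelet-reconstruction1} already shows that $W_t^*|\hat{|f|}|$ is finite almost everywhere; concretely, it is dominated by a convolution of $|f|$ with a radial kernel built from $|w|$. We run the same Fubini argument with $|u|,|v|,|f|$ in place of $u,v,f$. Since $|u|*|v|\ge |w|$, this requires checking that the kernel obtained from $|u|*|v|$ via \eqref{eq:psi-w-relation} is still locally integrable on $\mathbb R^n$ with enough decay. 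For that we use $\int_0^\infty s^{n-k-1}(|u|+|v|)\,ds<\infty$ together with the fact that convolution with $|x|^{k-n}$-type kernels maps $L^1+L^p$ into $L^1_{\rm loc}$ when $p<n/k$. Given this domination, Fubini is justified almost everywhere, and the identity from Step 1 holds for every $f\in X$ almost everywhere, with no density argument needed. If the direct majorization proves too delicate, the fallback is to prove the identity on $C_{\rm c}$, pass to $X$ by density (assumption (2)), and use the boundedness of $f\mapsto\int_\varepsilon^\infty W_t^*\hat f\,dt/t$ on $X$ established in Theorem~\ref{thm:wavelet-reconstruction1}, extracting almost everywhere convergent subsequences.

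\emph{Step 3: conclusion.} Finally, for $\varepsilon>0$,
\[
\int_\varepsilon^\infty V_t^*U_tf\,\frac{dt}{t^{1+k}}=\int_\varepsilon^\infty W_t^*\hat f\,\frac{dt}{t}.
\]
Theorem~\ref{thm:wavelet-reconstruction1} yields convergence to $cf$ in $X$ and almost everywhere. Since $w$ satisfies \eqref{eq:moment-conditions} and \eqref{eq:decay-condition}, and $\tilde\psi$ has an integrable decreasing majorant, $c$ takes the explicit form \eqref{eq:constant-c-explicit} by Corollary~\ref{cor:inversion-constant}.
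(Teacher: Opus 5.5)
Your proposal follows the paper's route: the paper also deduces Theorem~\ref{Theorem 3.5} directly from Theorem~\ref{thm:wavelet-reconstruction1} via Rubin's factorization, with absolute convergence covered by Remark~\ref{rem 20260306-1} and the constant taken from Corollary~\ref{cor:inversion-constant}. Your fibrewise computation gives the correctly normalized identity $V_t^{*}U_tf=t^{k}W_t^{*}\hat f$, which is exactly what turns $dt/t^{1+k}$ into $dt/t$ (the paper writes it without the factor $t^{k}$). The one point to tighten is $k=1$ in Step~2, because there the kernel generated by $|u|*|v|$ through \eqref{eq:psi-w-relation} is not pointwise $O(|x|^{1-n})$ (it can have $(|x|^2-s^2)^{-1/2}$-type singularities on spheres), so Hardy--Littlewood--Sobolev alone does not give a.e.\ finiteness and you need a ball-averaging argument in the spirit of Lemmas~\ref{lem:ball-integral-estimate} and~\ref{lem:ball-integral-estimate1}.
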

Here and throughout this paper we use the following convention.
Let $A, B \ge 0$.
We write $A \lesssim B$ if there exists a constant $C>0$ such that
\[
A \le C B,
\]
where the constant $C$ is usually independent of the functions under
consideration.
The notation $A \sim B$ means that both inequalities
$A \lesssim B$ and $B \lesssim A$ hold.

The remainder of this paper is organized as follows:
Section~\ref{s2} recalls some preliminary facts.
Section~\ref{s3} is devoted to the proofs of the theorems stated in the
Introduction.
In Section~\ref{s4} we present examples of Banach lattices.
Finally, Section~\ref{s5} compares the Muckenhoupt condition
of Banach lattices with
translation invariance.

\section{Preliminaries}
\label{s2}

We recall the preliminary material needed in the sequel.
Following \cite{Nieraeth26,Nogayama26}, we review the notion of Banach
lattices in Section~\ref{s2.0}.
Sections~\ref{s2.1}--\ref{s2.2} are based on
\cite[Chapter~12]{Rubin2015}.
Section~\ref{s2.1} is devoted to a compact manifold, namely the linear
Grassmann manifold, which serves as a basic ingredient for
Section~\ref{s2.2a}.
Section~\ref{s2.2a} constitutes the main technical tool of this paper,
where we study the affine Grassmann manifold.
Section~\ref{s2.2} recalls the $k$-plane Radon transform and the ridgelet
transform.
Finally, Section~\ref{s2.5} is oriented in a different direction, where
we establish an averaging lemma inspired by
\cite{Vinogradov2024}.
\subsection{Banach lattices}
\label{s2.0}

We consider the following conditions for a Banach function space $X$.

\begin{itemize}
\item[(L)] \textbf{(Lattice property)}
For all $f \in X$ and $g \in L^{0}(\mathbb{R}^{n})$, if
$|g| \le |f|$ almost everywhere, then $g \in X$ and
\[
\|g\|_{X} \le \|f\|_{X}.
\]

\item[(F)] \textbf{(Fatou property)}
If a norm bounded sequence $\{f_j\}_{j\in\mathbb{N}} \subset X$ satisfies
$0 \le f_j \uparrow f$ almost everywhere,
then 
\[
\|f\|_{X} = \sup_{j\in\mathbb{N}} \|f_j\|_{X}.
\]

\item[(Si)] \textbf{(Simple integrability)}
For any measurable set $E \subset \mathbb{R}^{n}$ with finite measure,
$\chi_E \in X$.

\item[(BSi)] \textbf{(Ball simple integrability)}
For any ball $B \subset \mathbb{R}^{n}$,
$\chi_B \in X$.

\item[(LI)] \textbf{(Local ideal property)}
For any measurable set $E \subset \mathbb{R}^{n}$ with finite measure and
any $f \in X$, there exists a constant $C>0$, independent of $f$, such that
\[
\|f\|_{L^1(E)} \le C \|f\|_{X}.
\]

\item[(BLI)] \textbf{(Ball local boundedness)}
For any ball $B \subset \mathbb{R}^{n}$ and any $f \in X$, there exists a constant
$C>0$, independent of $f$, such that
\[
\|f\|_{L^1(B)} \le C \|f\|_{X}.
\]

\item[(Sa)] \textbf{(Saturation property)}
For every measurable subset $E \subset \mathbb{R}^{n}$ with positive measure,
there exists a measurable set $F \subset E$ with nonzero measure such that
\[
\chi_F \in X.
\]
\end{itemize}

Based on this, we provide the following definitions
\cite[Definition 1.1]{Nogayama26}:
\begin{definition}[Banach function spaces]\label{def:BFS}\
\begin{enumerate}
\item
A Banach space $X$ is called a \emph{Banach function space} if $X$ enjoys the
properties {$\rm(L)$}, $\rm(F)$, $\rm(Si)$ and $\rm(LI)$.

\item
A Banach space $X$ is called a \emph{ball Banach function space} if $X$ enjoys the
properties
$\rm(L)$, $\rm(F)$, $\rm(BSi)$ and
$\rm(BLI)$.

\item
A Banach space $X$ is called a \emph{saturated Banach function space} if $X$
enjoys the properties $\rm(L)$, $\rm(F)$ and $\rm(Sa)$.
\end{enumerate}
\end{definition}

\subsection{The Linear Grassmannian $G_{n,k}$}
\label{s2.1}

We start with an elementary notation.
Let $n \ge 2$ and $1 \le k<n-1$.
We denote by $e_1,\dots,e_n$ the standard coordinate unit vectors in $\mathbb{R}^n$ and set
\[
\tau_0 :=\mathbb{R}^k = \mathbb{R}e_1 + \cdots + \mathbb{R}e_k= \mathbb{R}^k \times \{0_{n-k}\}
\subset {\mathbb R}^n,
\qquad
\mathbb{R}^{n-k} =\tau_0^\perp= \mathbb{R}e_{k+1} + \cdots + \mathbb{R}e_n\subset {\mathbb R}^n.
\]
Sometimes $\tau_0$ is called the reference $k$-dimensional subspace.  
\begin{definition}
The \emph{linear Grassmannian} $G_{n,k}$ is the manifold of all 
real
$k$-dimensional linear subspaces of $\mathbb{R}^n${\rm:}
\[
G_{n,k} := \{ V \subset \mathbb{R}^n \,:\, 
V \mbox{ is a subspace with }\dim V = k\}.
\]
\end{definition}
We will specify its structure of a manifold 
by means of the homogeneous space representation.

The orthogonal group ${\rm O}(n)$ acts transitively on the Grassmannian $G_{n,k}$ by
\[
g \cdot V := g(V), \qquad g \in {\rm O}(n), \ V \in G_{n,k}.
\]
Once we equip $G_{n,k}$ with the structure of  a manifold described shortly,
this action is smooth and preserves the dimension of subspaces.
We will view ${\rm O}(k) \times {\rm O}(n-k)$ as a subgroup of ${\rm O}(n)$ via
$(A,B) \mapsto A \oplus B=\begin{pmatrix}A&0\\0&B\end{pmatrix}$.
Let $g,h \in {\rm O}(n)$.
Remark that
$g \tau_0=h\tau_0$
if and only if
$g^{-1}h\tau_0=\tau_0$.
This means that 
$g^{-1}h \in {\rm O}(k) \times {\rm O}(n-k)$.
Therefore the stabilizer subgroup of $\tau_0$ under this action is
\[
\mathrm{Stab}(\tau_0)
:= \{ g \in {\rm O}(n) \,:\, g(\tau_0) = \tau_0 \}
= {\rm O}(k) \times {\rm O}(n-k).
\]

Consequently, the orbit--stabilizer theorem yields a canonical identification
\[
G_{n,k} \cong {\rm O}(n)/\mathrm{Stab}(\tau_0) \cong {\rm O}(n)/({\rm O}(k) \times {\rm O}(n-k)),
\]
endowing $G_{n,k}$ with the structure of a compact smooth homogeneous manifold.

More explicitly, the mapping
\[
\pi:{\rm O}(n) \to G_{n,k}, 
\qquad \pi(g) := g \cdot \tau_0
\]
is continuous and surjective, and satisfies
\[
\pi(g h) = \pi(g), \qquad h \in {\rm O}(k) \times {\rm O}(n-k).
\]
Therefore $\pi$ descends to a well-defined bijection
\[
\tilde{\pi}:
{\rm O}(n)/({\rm O}(k) \times {\rm O}(n-k)) \to G_{n,k},
\qquad
[g] \longmapsto g \cdot \tau_0=g(\tau_0).
\]
Equip $G_{n,k}$ with the structure of a manifold
so that this bijection  is a diffeomorphism.
A direct consequence that
the dimension of $G_{n,k}$ is
\[
\dim G_{n,k} = \frac{n(n-1)}{2}-\frac{k(k-1)}{2}-\frac{(n-k)(n-k-1)}{2} =k(n-k).
\]
Moreover, $G_{n,k}$ carries a unique ${\rm O}(n)$-invariant probability measure
$\gamma$,
induced from the Haar measure on ${\rm O}(n)$ via this quotient representation.
More precisely, the measure is given as follows{\rm:}
Let $\gamma$ be the probability Haar measure on the orthogonal group ${\rm O}(n)$. Then a natural ${\rm O}(n)$-invariant measure on $G_{n,k}$ is defined via 
$\pi$
as
\[
\mu_{G_{n,k}}(E) := \gamma(\pi^{-1}(E))
\]
for any measurable $E \subset G_{n,k}$.

\subsection{The Affine Grassmannian $\mathcal{G}_{n,k}$}
\label{s2.2a}

Let ${\mathcal G}_{n,k}$ denote the manifold of all nonoriented $k$-dimensional affine planes
$\tau$ in $\mathbb{R}^n$, where $1 \le k < n$.
The \emph{affine Grassmannian} $\mathcal{G}_{n,k}$ is the set of all $k$-dimensional affine planes (not necessarily through the origin) in $\mathbb{R}^n${\rm:}
\[
\mathcal{G}_{n,k} := \{\tau \subset \mathbb{R}^n \,:\, \tau = V + x, \ V \in G_{n,k},\ x \in \mathbb{R}^n \}.
\]

Each affine plane can be uniquely decomposed as
\[
\tau = V + x, \quad V \in G_{n,k},\quad x \in V^\perp,
\] where $V^\perp$ stands for the orthogonal complement
for $V \in G_{n,k}$.

The space ${\mathcal G}_{n,k}$ also has
a homogeneous space representation.
Recall first that
the \emph{Euclidean group} is
\[
M(n) := {\rm O}(n) \ltimes \mathbb{R}^n,
\]
with multiplication
\[
(T,y) \cdot (S,z) := (TS, T z + y), \quad T,S \in {\rm O}(n), \ y,z \in \mathbb{R}^n.
\]
It is noteworthy that $(T,y)$ generates an isometry on ${\mathbb R}^n${\rm:}
$I_{(T,y)}{\rm:}x \mapsto T x+y$.
Remark that $I_{(T,y)} \circ I_{(S,z)}=I_{(T,y) \cdot(S,z)}$.
The group acts transitively on $\mathcal{G}_{n,k}${\rm:}
\[
\mathcal{G}_{n,k}=(T,y) \cdot \tau_0 =I_{(T,y)}(\tau_0)= \{ T v + y \,:\, v \in \tau_0 \}.
\]
Let $d\gamma$ be the Haar measure on ${\rm O}(n)$ 
as before
and $dy$ the Lebesgue measure on $\mathbb{R}^n$. Then the Haar measure on $M(n)$ is
\[
d\mu_{M(n)}(T,y) = d\gamma(T) \, dy.
\]
Thus, for a measurable set $E \subset M(n)$,
the measure is given by
\[
\mu_{M(n)}(E)=\int_{M(n)}\chi_E(T,y)d\gamma(T)\,dy,
\]
where $\chi_E$ is the indicator function of $E$.

Let
\[
H := ({\rm O}(k) \times {\rm O}(n-k)) \ltimes \mathbb{R}^k.
\]
Any element of $H$ is written as $(A,B,a)$ with
$A \in {\rm O}(k)$, $B \in {\rm O}(n-k)$, and $a \in \mathbb{R}^k$.
We define an embedding
\[
\iota:H \hookrightarrow M(n)
\]
by
\[
\iota(A,B,a)
=
\left(
A \oplus B,
\begin{pmatrix}
a \\
0_{n-k}
\end{pmatrix}
\right)
=
\left(
\begin{pmatrix}
A & 0 \\
0 & B
\end{pmatrix},
\begin{pmatrix}
a \\
0_{n-k}
\end{pmatrix}
\right).
\]
Therefore,
the stabilizer of the reference plane $\tau_0 = \mathbb{R}^k \times \{0\}$ is
$H$
if we go through a similar argument as before.

Thus we can represent the affine Grassmannian as a homogeneous space{\rm:}
\[
\mathcal{G}_{n,k} \cong M(n)/H.
\]

The natural measure on the affine Grassmannian $\mathcal{G}_{n,k}$ is induced from $M(n)${\rm:}
\begin{equation}\label{eq:260112-1}
d\mu_{\mathcal{G}_{n,k}}(\tau) = d\mu_{\mathrm{G}_{n,k}}(V) \, dx, \quad \tau = V + x, \ x \in V^\perp.
\end{equation}
The precise meaning of (\ref{eq:260112-1}) is as follows{\rm:}
Let $A \subset \mathcal{G}_{n,k}$ be measurable. Then
\[
\mu_{\mathcal{G}_{n,k}}(A) = \int_{V \in G_{n,k}} \left( \int_{x \in V^\perp} \chi_A(V+x) \, dx \right) d\mu_{G_{n,k}}(V).
\]

Here is an example{\rm:}
\begin{example}[Lines in $\mathbb{R}^3$ ($k=1, n=3$)]
Let ${\mathbb R}{\mathbb P}^2$ denote $G_{3,1}$.
Then the
measure of a set of lines $
A$ is given by
\[
\mu_{\mathcal{G}_{3,1}}(A) = \int_{V \in \mathbb{RP}^2} \left( \int_{x \in V^\perp} \chi_A(V+x) \, dx \right) d\mu_{{\mathbb R}{\mathbb P}^2}(V).
\]
\end{example}

\subsection{The $k$-plane Radon and ridgelet transforms}
\label{s2.2}

The operators \eqref{eq:Ut} and \eqref{eq:Vt-star} may be viewed as
wavelet-like transforms associated with the $k$-plane Radon transform
\eqref{eq:k-plane-radon} and its dual.
We therefore refer to them as the \emph{$k$-plane ridgelet transforms}.

The asterisk ``$*$'' in $V_t^{*}$ and $W_t^*$ indicates that these operators act in the
reverse direction, mapping functions on ${\mathcal G}_{n,k}$ back to functions on
$\mathbb{R}^n$.

Usually, $w$ is generated by $u$ and $v$ via convolution in ${\mathbb R}^{n-k}$
as in (\ref{eq:def-w-convolution}).
We assume that $u$, $v$, and $w$ are radial integrable functions on $\mathbb{R}^{n-k}$.
By a slight abuse of notation, we write
\[
u(y)=u(|y|), \qquad
v(y)=v(|y|), \qquad
w(y)=w(|y|)
\]
for $y \in \mathbb{R}^{n-k}$.
We assume that $w$ satisfies the following condition, called
\emph{admissibility}
as in Definition \ref{def:admissible-pair}
\cite[p. 237]{Rubin2004}.

We move on to
the dual $k$-plane
transform $\check{\varphi}$ of a function
$\varphi:{\mathcal G}_{k,n} \to {\mathbb C}$ is defined by
\begin{equation}\label{eq:dual-k-plane-transform}
\check{\varphi}(x)
=
\int_{\mathrm{SO}(n)}
\varphi(x+\gamma\tau_0)\,d\gamma,
\qquad x\in\mathbb{R}^n.
\end{equation}

We recall an important example of
calculating $\hat f$ and $\check{\varphi}$.
\begin{lemma}{\rm \cite[Lemma 2.1]{Rubin2004}}\label{lem:abel-representation}
For $x\in\mathbb{R}^n$ and $\tau:=\zeta+x''\in {\mathcal G}_{n,k}$
with $x'' \in \zeta^\perp$, let
\begin{equation}\label{eq:r-s-def}
r=|x|=\operatorname{dist}(0,x),
\qquad
s=|x''|=\operatorname{dist}(0,\tau)=|\tau|
\end{equation}
denote the corresponding distances from the origin.
If $f$ and $\varphi$ are radial, that is,
$f(x):= f_0(r)$ and $\varphi(\tau):=\varphi_0(s)$, then
$\hat f(\tau)$ and $\check{\varphi}(x)$ admit the representations
\begin{equation}\label{eq:abel-hatf}
\hat f(\tau)
=
\sigma_{k-1}
\int_s^{\infty}
f_0(t)\,
(t^2-s^2)^{\frac{k}{2}-1}
\,t\,dt, \qquad \tau \in \mathcal{G}_{n,k},
\end{equation}
and
\begin{equation}\label{eq:abel-checkphi}
\check{\varphi}(x)
=
\frac{\sigma_{k-1}\sigma_{n-k-1}}{\sigma_{n-1}}
\,r^{2-n}
\int_0^{r}
\varphi_0(t)\,
(r^2-t^2)^{\frac{k}{2}-1}
\,t^{n-k-1}\,dt,
\end{equation}
provided that the integrals
$\displaystyle \int_s^{\infty}f_0(t)\,
(t^2-s^2)^{\frac{k}{2}-1}
\,t\,dt$ and $\displaystyle \int_0^{r} \varphi_0(t)\,
(r^2-t^2)^{\frac{k}{2}-1}
\,t^{n-k-1}\,dt$ are absolutely convergent.
\end{lemma}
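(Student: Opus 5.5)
The plan is to reduce both formulas in Lemma~\ref{lem:abel-representation} to polar coordinates adapted to the relevant plane. Everything is then an explicit one-dimensional integral, and the factors $\sigma_{k-1}$, $\sigma_{n-k-1}$, $\sigma_{n-1}$ appear as surface measures of spheres.

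\emph{Formula \eqref{eq:abel-hatf}.} Write $\tau=\zeta+x''$ with $x''\in\zeta^\perp$ and $|x''|=s$. Every point of $\tau$ has the form $y+x''$ with $y\in\zeta$, and orthogonality gives $|y+x''|^2=|y|^2+s^2$. Hence
\[
\hat f(\tau)=\int_\zeta f_0\bigl(\sqrt{|y|^2+s^2}\bigr)\,dy .
\]
Passing to polar coordinates in $\zeta\cong\mathbb R^k$ gives $\sigma_{k-1}\int_0^\infty f_0(\sqrt{\rho^2+s^2})\rho^{k-1}\,d\rho$. The substitution $t=\sqrt{\rho^2+s^2}$, with $\rho\,d\rho=t\,dt$ and $\rho^{k-2}=(t^2-s^2)^{\frac k2-1}$, yields \eqref{eq:abel-hatf}. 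The assumed absolute convergence justifies these manipulations through Tonelli's theorem.

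\emph{Formula \eqref{eq:abel-checkphi}.} This is the harder step, because we must identify the distribution of the distance $|x+\gamma\tau_0|$ as $\gamma$ runs over $\mathrm{SO}(n)$. First compute this distance: it equals the norm of the component of $x$ orthogonal to $\gamma\tau_0$. Write $\theta=\gamma^{-1}x/r\in S^{n-1}$. Then $|x+\gamma\tau_0|=r|P\theta|$, where $P$ is the orthogonal projection onto $\mathbb R^{n-k}=\tau_0^\perp$. By invariance of Haar measure, $\theta$ is uniformly distributed on $S^{n-1}$, so
\[
\check\varphi(x)=\frac1{\sigma_{n-1}}\int_{S^{n-1}}\varphi_0(r|P\theta|)\,d\theta .
\]

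To evaluate this spherical integral, use the bispherical decomposition $\theta=(\cos\alpha\,\eta,\sin\alpha\,\xi)$ with $\eta\in S^{k-1}$, $\xi\in S^{n-k-1}$ and $\alpha\in(0,\pi/2)$. Its surface element is
\[
d\theta=\cos^{k-1}\alpha\,\sin^{n-k-1}\alpha\,d\alpha\,d\eta\,d\xi .
\]
Integrating out $\eta$ and $\xi$ produces the factor $\sigma_{k-1}\sigma_{n-k-1}$, leaving $\int_0^{\pi/2}\varphi_0(r\sin\alpha)\cos^{k-1}\alpha\sin^{n-k-1}\alpha\,d\alpha$. Now substitute $t=r\sin\alpha$, so that $d\alpha=dt/(r\cos\alpha)$ and $\cos\alpha=\sqrt{r^2-t^2}/r$. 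The integrand becomes
\[
\varphi_0(t)\,(r^2-t^2)^{\frac{k}{2}-1}\,t^{n-k-1}\,r^{-(k-1)-(n-k-1)-1+1}=\varphi_0(t)\,(r^2-t^2)^{\frac{k}{2}-1}\,t^{n-k-1}\,r^{2-n}.
\]
This gives \eqref{eq:abel-checkphi} with the constant $\sigma_{k-1}\sigma_{n-k-1}/\sigma_{n-1}$.

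Two details need care. First, justifying the bispherical decomposition: one can check it by integrating a Gaussian-weighted radial function against it, or derive it from polar coordinates in $\mathbb R^k\times\mathbb R^{n-k}$. Second, the degenerate case $k=1$, where $\sigma_0=2$ correctly counts the two points of $S^0$.
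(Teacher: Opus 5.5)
Your proof is correct and follows essentially the route the paper indicates. The paper cites Rubin and only points to the bipolar coordinate change $x=T\omega+S\theta$ adapted to $\tau\oplus\tau^\perp$; this is exactly the polar coordinates in $\zeta$ and the bispherical decomposition of $S^{n-1}$ you use, with the exponent bookkeeping giving $r^{2-n}$ and the normalization of Haar measure giving the constant $\sigma_{k-1}\sigma_{n-k-1}/\sigma_{n-1}$.
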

We invoke \cite[Lemma 2.3]{Rubin2004}.

Equality \eqref{eq:abel-hatf} is a consequence of the bipolar coordinate change.
\[
x=T\omega+S\theta, \quad
T,
S \ge 0,\quad
\omega \in \tau \cap S^{n-1},\quad
\theta \in \tau^\perp \cap S^{n-1}.
\]
We denote
\begin{equation}\label{eq:inner-products}
(f_1,f_2)
=
\int_{\mathbb{R}^n}
f_1(x)f_2(x)\,dx,
\qquad
(\varphi_1,\varphi_2)^{\sim}
=
\int_{{\mathcal G}_{n,k}}
\varphi_1(\tau)\varphi_2(\tau)\,d\mu_{\mathcal{G}_{n,k}}(\tau).
\end{equation}
An important duality relation for
the transforms~\eqref{eq:k-plane-radon} and~\eqref{eq:dual-k-plane-transform} is
\begin{equation}\label{eq:duality}
(\hat f,\varphi)^{\sim} = (f,\check{\varphi}),
\end{equation}
provided that either one of the following holds:
\begin{enumerate}
\item
$f$ and $\varphi$ are non-negative functions.
\item
Either
 side is finite when $f$ and $\varphi$ are replaced by
$|f|$ and $|\varphi|$, respectively.
\end{enumerate}
See Rubin~\cite[(2.4)]{Rubin2004}
for the case where both sides in (\ref{eq:duality}) with $f$ and $\varphi$ replaced by
$|f|$ and $|\varphi|$
are finite.
An inspection of the proof of \cite[(2.4)]{Rubin2004} shows that
(\ref{eq:duality}) holds for  non-negative functions.
We also recall the following result from \cite[Lemma 2.3]{Rubin2004}.

\begin{lemma}\label{lem:convolution-identities}
Let $a(\cdot)$ and $b(\cdot)$ be measurable functions on $\mathbb{R}_{+}$.
Assume that
\begin{equation}\label{eq:260118-2}
\int_0^{r}
s^{n-k-1}
|a(s)|
(r^2-s^2)^{\frac{k}{2}-1}
\,ds < \infty,
\qquad r>0,
\end{equation}
and that
\begin{equation}\label{eq:260118-3}
\int_s^{\infty}
|b(r)|
(r^2-s^2)^{\frac{k}{2}-1}
\,r\,dr < \infty,
\qquad s>0.
\end{equation}
Define
\begin{equation}\label{eq:psi-def}
\psi(r)
=
c\, r^{2-n}
\int_0^{r}
s^{n-k-1}
a(s)\,
(r^2-s^2)^{\frac{k}{2}-1}
\,ds,
\qquad
c=\frac{\sigma_{k-1}\sigma_{n-k-1}}{\sigma_{n-1}}
\end{equation}
and
\begin{equation}\label{eq:h-def}
h(s)
=
\sigma_{k-1}
\int_s^{\infty}
b(r)\,
(r^2-s^2)^{\frac{k}{2}-1}
\,r\,dr.
\end{equation}
Then the following convolution identities hold:
\begin{enumerate}
\item[$(1)$]
Let $f \in L^1_{\mathrm{loc}}(\mathbb{R}^n)$ and let $x \in \mathbb{R}^n$ be such that
\begin{equation}\label{eq:260118-11}
\int_{0}^{\infty}
\int_{\{\,y\in\mathbb{R}^n : |x-y|\ge s\,\}}
|f(y)\, a(s)|\,
|x-y|^{2-n}\,
s^{n-k-1}\,
\left(|x-y|^{2}-s^{2}\right)^{k/2-1}
\, dy\, ds < \infty.
\end{equation}
Then
\begin{equation}\label{eq:identity-i}
\int_{{\mathcal G}_{n,k}}
\hat f(\tau)\,
a\!\left(|x-\tau|\right)
\,d\mu_{\mathcal{G}_{n,k}}(\tau)
=
\int_{\mathbb{R}^n}
f(y)\,
\psi\!\left(|x-y|\right)
\,dy.
\end{equation}
\item[$(2)$]
Let $\varphi \in L^1_{\mathrm{loc}}({\mathcal G}_{n,k})$ and let $x \in \mathbb{R}^n$ be such that
\begin{equation}\label{eq:260118-12}
\int_{{\mathcal G}_{n,k}}
\,\int_{|x-\tau|}^{\infty}
|\varphi(\tau)b(r)|
(r^2-|x-\tau|^2)^{\frac{k}{2}-1}
\,r\,dr
\,d\mu_{\mathcal{G}_{n,k}}(\tau) < \infty.
\end{equation}
Then
\begin{equation}\label{eq:identity-ii}
\int_{\mathbb{R}^n}
\check{\varphi}(y)\,
b\!\left(|x-y|\right)
\,dy
=
\int_{{\mathcal G}_{n,k}}
\varphi(\tau)\,
h\!\left(|x-\tau|\right)
\,d\mu_{\mathcal{G}_{n,k}}(\tau).
\end{equation}
\end{enumerate}
\end{lemma}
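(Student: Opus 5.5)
The plan is to reduce both identities to Fubini's theorem combined with two geometric facts: the duality relation \eqref{eq:duality} and the radial formulas of Lemma~\ref{lem:abel-representation}. Translation invariance lets us assume $x=0$ throughout: the measure $\mu_{\mathcal G_{n,k}}$ is invariant under the Euclidean group $M(n)$, so replacing $f$ by $f(\cdot+x)$ and $\varphi$ by $\varphi(\cdot+x)$ changes nothing. Condition \eqref{eq:260118-11} (respectively \eqref{eq:260118-12}) is exactly the absolute convergence of the iterated integral that results after this reduction, and it is what licenses every interchange of integrals below.

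For part $(1)$ we take $x=0$ and apply \eqref{eq:duality} with $\varphi(\tau)=a(|\tau|)$. This gives
\[
\int_{\mathcal G_{n,k}}\hat f(\tau)\,a(|\tau|)\,d\mu_{\mathcal G_{n,k}}(\tau)=\int_{\mathbb R^n}f(y)\,\check\varphi(y)\,dy .
\]
Since $\varphi$ is radial with $\varphi_0=a$, formula \eqref{eq:abel-checkphi} yields $\check\varphi(y)=\psi(|y|)$ with $\psi$ as in \eqref{eq:psi-def}, and this is \eqref{eq:identity-i}.

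To justify the use of duality for signed data, we first run the argument with $|f|$ and $|a|$ in place of $f$ and $a$. Duality holds for non-negative functions, and \eqref{eq:abel-checkphi} applied to $|a|$ produces the integrand of \eqref{eq:260118-11}, up to the factor $c$ and after writing $\int f\check\varphi$ in polar form. So the hypothesis says precisely that the right-hand side with absolute values is finite. By alternative (2) following \eqref{eq:duality}, duality then holds for $f$ and $\varphi$ themselves. Hypothesis \eqref{eq:260118-2} ensures that $\psi(r)$ is finite for each $r$, so \eqref{eq:abel-checkphi} applies pointwise.

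For part $(2)$ we again take $x=0$ and apply \eqref{eq:duality} in the reverse direction with $f(y)=b(|y|)$:
\[
\int_{\mathbb R^n}\check\varphi(y)\,b(|y|)\,dy=\int_{\mathcal G_{n,k}}\varphi(\tau)\,\hat f(\tau)\,d\mu_{\mathcal G_{n,k}}(\tau).
\]
By \eqref{eq:abel-hatf}, $\hat f(\tau)=h(|\tau|)$ with $h$ as in \eqref{eq:h-def}; the integral defining $h$ converges absolutely thanks to \eqref{eq:260118-3}. The absolute version of the right-hand side is exactly \eqref{eq:260118-12}, so once more duality for signed functions follows from the non-negative case.

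The main obstacle is the bookkeeping that shows the absolute-value integrals produced by \eqref{eq:abel-hatf} and \eqref{eq:abel-checkphi}, applied to $|a|$ and $|b|$, coincide with the hypotheses \eqref{eq:260118-11} and \eqref{eq:260118-12}. In particular the normalizing constants and the factor $|y|^{2-n}$ must match. Lemma~\ref{lem:abel-representation} is stated only under absolute convergence, so the identities have to be established first for the non-negative kernels and then extended by linearity, splitting $a=a^+-a^-$ and $b=b^+-b^-$ and, when $f$ is complex, splitting $f$ into real and imaginary positive parts.
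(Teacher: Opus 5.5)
Your proposal is correct and matches the paper's argument. The paper also proves both identities by Fubini's theorem, in the form of the duality \eqref{eq:duality} for the pairings \eqref{eq:inner-products}, combined with the radial Abel formulas of Lemma~\ref{lem:abel-representation}; \eqref{eq:260118-11} and \eqref{eq:260118-12} serve exactly as the absolute-convergence conditions that justify the interchange. Your explicit reduction to $x=0$ via the translation invariance of $\mu_{\mathcal G_{n,k}}$ is a step the paper leaves implicit, and the bookkeeping you flag does close: for $\varphi(\tau)=|a(|\tau|)|$, Tonelli in the $(y,s)$ variables shows that $(|f|,\check{\varphi})$ is exactly $c$ times the left-hand side of \eqref{eq:260118-11} at $x=0$.
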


\begin{proof}
The proofs of (\ref{eq:identity-i}) and (\ref{eq:identity-ii}) rely on Fubini's theorem, 
Lemma~\ref{lem:abel-representation}, and (\ref{eq:inner-products}). 
The application of Fubini's theorem can be justified 
by (\ref{eq:260118-11}) and (\ref{eq:260118-12}), respectively. 
See also the proof of \cite[Lemma~12.29]{Rubin2015}.
\end{proof}

A clarifying remark concerning the proof of Lemma~\ref{lem:convolution-identities} is in order.

We consider
the convolution--backprojection method and ridgelet transforms.

Let $0<\alpha<n$.
Let $I_\alpha$ be the {\it fractional integral operator}
given by
\begin{equation}\label{eq:170426-99}
I_\alpha f(x)
:=
\int_{{\mathbb R}^n}\frac{f(y)}{|x-y|^{n-\alpha}}\,dy,
\quad x \in {\mathbb R}^n,
\end{equation}
for a non-negative measurable function $f:{\mathbb R}^n \to [0,\infty]$
or more generally for a measurable function $f:{\mathbb R}^n \to {\mathbb C}$.
We consider $I_\alpha f(x)$ as long as the integral makes sense.
We will need the following auxiliary estimate:

\begin{lemma}\label{lem:ball-integral-estimate}
Let $B \subset \mathbb{R}^n$ be a ball of radius $1$, and let $c_B$ denote its center.
Then, for all $0 < s \le 2$ and $y \in \mathbb{R}^n$,
\begin{equation}\label{eq:ball-integral-estimate}
\int_B
\frac{\chi_{(s,\infty)}(|x-y|)}
{|x-y|^{n-2}\bigl(|x-y|^2 - s^2\bigr)^{1/2}}
\, dx
\;\lesssim\;
\frac{1+s}{1+|y-c_B|^{\,n-1}}
\sim
\frac{1}{1+|y-c_B|^{\,n-1}},
\end{equation}
where the implicit constant is independent of $B$, $y$, and $s$.
\end{lemma}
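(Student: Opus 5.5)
By translation invariance of Lebesgue measure we may take $c_B=0$, so $B=B(0,1)$. Put $z=x-y$; the integral becomes
\[
J(y,s):=\int_{B(-y,1)} \frac{\chi_{(s,\infty)}(|z|)}{|z|^{n-2}(|z|^2-s^2)^{1/2}}\,dz .
\]
The integrand is radial in $z$, so polar coordinates give
\[
J(y,s)=\int_s^\infty \frac{\sigma(\rho)}{\rho^{n-2}(\rho^2-s^2)^{1/2}}\,d\rho,
\]
where $\sigma(\rho)$ is the $(n-1)$-dimensional measure of $S(0,\rho)\cap B(-y,1)$. The proof reduces to two elementary bounds on $\sigma(\rho)$. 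First, $\sigma(\rho)\le\sigma_{n-1}\rho^{n-1}$ always. Second, $\sigma(\rho)=0$ unless $\bigl||y|-\rho\bigr|<1$. Third, when $\rho\ge 1$, the cap $S(0,\rho)\cap B(-y,1)$ has diameter at most $2$ and lies in a sphere of radius $\rho\ge1$, so $\sigma(\rho)\lesssim 1$ uniformly in $\rho$.

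\emph{Case $|y|\le 4$.} Here the target is $\sim 1$. Use $\sigma(\rho)\lesssim\rho^{n-1}$ and the support restriction $\rho\le|y|+1\le 5$ to get
\[
J\lesssim \int_s^{5}\frac{\rho}{(\rho^2-s^2)^{1/2}}\,d\rho=(25-s^2)^{1/2}\le 5,
\]
with the convention that the integral is $0$ if $s\ge 5$, which cannot happen since $s\le 2$. This is bounded uniformly, as required.

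\emph{Case $|y|>4$.} Here only $\rho\in(|y|-1,|y|+1)$ contributes. Since $\rho>3\ge s+1$, we have $\rho^2-s^2\ge \rho^2-(\rho-1)^2\gtrsim\rho$; in fact $\rho^2-s^2\ge\rho^2-4\ge\tfrac{5}{9}\rho^2$. Using $\sigma(\rho)\lesssim1$ gives
\[
J\lesssim\int_{|y|-1}^{|y|+1}\rho^{1-n}\,d\rho\lesssim |y|^{1-n}\sim\frac{1}{1+|y|^{n-1}}.
\]
Combining both cases yields \eqref{eq:ball-integral-estimate}. The factor $1+s$ is harmless because $1\le 1+s\le 3$, which also gives the stated equivalence $\sim$.

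The only point needing care is the cap-area bound $\sigma(\rho)\lesssim 1$ for $\rho\ge1$ with a constant independent of $\rho$ and $y$. To justify it, note that the cap is contained in $S(0,\rho)\cap B(p,2)$ for any point $p$ in the cap. By the rotation invariance and scaling of the sphere, its area is $\rho^{n-1}$ times the area of a geodesic cap of angular radius $\lesssim 1/\rho$ on $S^{n-1}$, and that area is $\lesssim\rho^{1-n}$. Alternatively, one can avoid caps entirely and compute directly in case 2: bound $|z|^{2-n}(|z|^2-s^2)^{-1/2}\lesssim |y|^{1-n}$ on $B(-y,1)$ and multiply by $|B|$. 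This is even simpler, and I would use that route for case 2.
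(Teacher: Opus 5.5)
Your proof is correct and follows essentially the same route as the paper: a near/far split in $|y-c_B|$, with polar coordinates centred at $y$ (your substitution $z=x-y$) giving $\int_s^{C}\rho(\rho^2-s^2)^{-1/2}\,d\rho\lesssim 1$ in the near case, and in the far case a pointwise bound $|x-y|^{2-n}(|x-y|^2-s^2)^{-1/2}\lesssim|y-c_B|^{1-n}$ multiplied by $|B|$. The cap-area estimate $\sigma(\rho)\lesssim 1$ is therefore unnecessary once you take the pointwise route in the far case, as you yourself propose.
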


\begin{proof}
We distinguish two cases according to the distance between $y$ and the center $c_B$.

\begin{enumerate}
\item[\emph{Case 1.}]
$|y-c_B| \ge 5$.

Recall that $s \le 2$.
For all $x \in B$, we have $|x-y| \gtrsim |y-c_B|\ge 5$
and
$|x-y| \ge|y-c_B|- |x-c_B|\ge4 \ge 2s$, and hence
\begin{equation}\label{eq:far-pointwise}
\frac{\chi_{(s,\infty)}(|x-y|)}
{|x-y|^{n-2}\bigl(|x-y|^2 - s^2\bigr)^{1/2}}
\;\lesssim\;
\frac{1}{1+|y-c_B|^{\,n-1}}.
\end{equation}
Integrating \eqref{eq:far-pointwise} over $B$ yields
\begin{equation}\label{eq:far-integral}
\int_B
\frac{\chi_{(s,\infty)}(|x-y|)}
{|x-y|^{n-2}\bigl(|x-y|^2 - s^2\bigr)^{1/2}}
\, dx
\;\lesssim\;
\frac{1}{1+|y-c_B|^{\,n-1}}.
\end{equation}

\item[\emph{Case 2.}]
$|y-c_B| \le 5$.

In this case, $B \subset B(y,6)$. Therefore,
\begin{align}
\int_B
\frac{\chi_{(s,\infty)}(|x-y|)}
{|x-y|^{n-2}\bigl(|x-y|^2 - s^2\bigr)^{1/2}}
\, dx
&\le
\int_{B(y,6)}
\frac{\chi_{(s,\infty)}(|x-y|)}
{|x-y|^{n-2}\bigl(|x-y|^2 - s^2\bigr)^{1/2}}
\, dx \nonumber\\
&\sim
\int_s^{6}
\frac{r\,dr}{\sqrt{r^2 - s^2}}
\;\lesssim\;
1, \label{eq:polar-estimate}
\end{align}
where we used polar coordinates centered at $y$ in the last step.
\end{enumerate}

Combining \eqref{eq:far-integral} and \eqref{eq:polar-estimate} completes the proof.
\end{proof}
\begin{lemma}\label{lem:ball-integral-estimate1}
Let $\beta \ge 1$.
Let $B \subset \mathbb{R}^n$ be a ball of radius $1$, and let $c_B$ denote its center.
Then, for all $s>2$, $\beta \ge 1$, and $y\in{\mathbb R}^n$, 
\begin{equation}\label{eq:ball-integral-estimate1}
\int_{B}
\frac{\chi_{(s,2s)}(|x-y|)}
{|x-y|^{n-2+\beta}\bigl(|x-y|^2-s^2\bigr)^{1/2}}
\,dx
\;\lesssim\;
\frac{1}{1+|y-c_B|^{n-1}},
\end{equation}
where the implicit constant is independent of $B$, $y$, and $s$.
\end{lemma}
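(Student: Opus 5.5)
We follow the case split used in the proof of Lemma~\ref{lem:ball-integral-estimate}, according to whether $y$ is far from or close to the ball $B$. The singular factor $(|x-y|^2-s^2)^{-1/2}$ is supported on the shell $s<|x-y|<2s$. On this shell we have $|x-y|^2-s^2=(|x-y|-s)(|x-y|+s)\ge s(|x-y|-s)$, so the integrand is bounded by
\[
\frac{\chi_{(s,2s)}(|x-y|)}{s^{n-2+\beta}\, s^{1/2}\,(|x-y|-s)^{1/2}} .
\]
This reduces the problem to estimating the measure of the intersection of $B$ with thin shells around $y$.

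\emph{Case 1: $|y-c_B|\ge 3s+1$ or $|y-c_B|\le s-1$.} Every $x\in B$ satisfies $|x-y|\ge 3s$ in the first subcase and $|x-y|<s$ in the second. In both subcases $B$ misses the shell $s<|x-y|<2s$, so the integral is zero.

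\emph{Case 2: $s-1<|y-c_B|<3s+1$.} Here $|y-c_B|\sim s$, because $s>2$. Therefore the target bound $(1+|y-c_B|^{n-1})^{-1}$ is comparable to $s^{1-n}$. We use polar coordinates centred at $y$, with $x=y+r\theta$. For each $r\in(s,2s)$, the set of directions $\theta$ with $y+r\theta\in B$ has $(n-1)$-dimensional measure, counted as $r^{n-1}d\theta$, at most $\lesssim 1$. Indeed, it is a sphere of radius $r$ intersected with a unit ball, and this is bounded uniformly because $r>2>1$. Moreover, only $r\in(|y-c_B|-1,|y-c_B|+1)$ contribute. Hence the integral is at most
\[
\lesssim s^{-n+2-\beta-1/2}\int_{\max(s,\,|y-c_B|-1)}^{\min(2s,\,|y-c_B|+1)}\frac{dr}{(r-s)^{1/2}}
\;\lesssim\; s^{\frac32-n-\beta}\int_s^{s+2}\frac{dr}{(r-s)^{1/2}}
\;\lesssim\; s^{\frac32-n-\beta}.
\]
Since $\beta\ge1$, we get $s^{3/2-n-\beta}\le s^{1/2-n}\le s^{1-n}\sim(1+|y-c_B|^{n-1})^{-1}$, which is the claim. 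The constants depend only on $n$; $\beta$ enters only through the exponent.

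\textbf{Main obstacle.} The step that needs care is the uniform bound on the spherical slice $|\{\theta: y+r\theta\in B\}|\,r^{n-1}\lesssim 1$ for $r\ge 2$. We will prove it by observing that the slice is a spherical cap of geodesic radius $\lesssim 1$ on a sphere of radius $r\ge 2$. Such a cap has surface area $\lesssim 1$: it projects onto a region of diameter $\lesssim1$ on a tangent hyperplane with bounded Jacobian. Alternatively, one can use the coarea formula for $x\mapsto|x-y|$, which has unit gradient.
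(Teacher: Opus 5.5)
Your proof is correct, but it is organized differently from the paper's.

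The paper splits at the fixed threshold $|y-c_B|=10$:
\begin{itemize}
\item For $|y-c_B|\le 10$ it integrates over the whole shell $s<|x-y|<2s$ in polar coordinates and gets $s^{1-\beta}\lesssim 1$.
\item For $|y-c_B|\ge 10$ it pulls out $|x-y|^{-(n-2+\beta)}\lesssim|y-c_B|^{-(n-2+\beta)}$. It then uses the solid-angle bound $\mathcal{H}^{n-1}(E)\lesssim|y-c_B|^{1-n}$ for the set $E$ of directions from $y$ into $B$.
\end{itemize}

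You instead first observe that the integrand vanishes on $B$ unless $s-1<|y-c_B|<3s+1$. So the only nontrivial regime is $|y-c_B|\sim s$. There the right-hand side is simply $\sim s^{1-n}$, and the crude bounds $|x-y|>s$ and $|x-y|+s\ge s$ suffice.

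The geometric input is the same in both arguments. Your slice bound $\mathcal{H}^{n-1}(\partial B(y,r)\cap B)\lesssim 1$ is the paper's solid-angle bound rescaled to radius $r\sim|y-c_B|$. Both arguments also integrate the singular factor over the window $r\in(|y-c_B|-1,|y-c_B|+1)$. The paper leaves that radial integration implicit; you carry it out via $(r-s)^{-1/2}$.

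What your route buys:
\begin{itemize}
\item a single nontrivial case;
\item a constant that is visibly uniform in $\beta$;
\item the sharper bound $s^{3/2-n-\beta}$, so $\beta\ge\frac12$ would already suffice.
\end{itemize}
The paper's route avoids tying $|y-c_B|$ to $s$, but it needs two separate estimates.

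One small remark on your ``main obstacle'' paragraph. The coarea formula only gives the polar decomposition; it does not by itself give the per-slice bound. A cleaner justification of the slice bound, valid for every $r>0$, is convexity. The set $B(y,r)\cap B$ is convex and contained in $B$. Its boundary contains $\partial B(y,r)\cap B$. By monotonicity of surface area for convex bodies, that boundary has area at most $\sigma_{n-1}$.
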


\begin{proof}
We distinguish two cases according to the distance between $y$ and the center $c_B$.

\begin{enumerate}
\item[\emph{Case 1.}]
$|y-c_B| \le 10$.

In this case $B \subset B(y,12)$. Using polar coordinates centered at $y$, we obtain
\begin{align*}
\int_{B}
\frac{\chi_{(s,2s)}(|x-y|)}
{|x-y|^{n-2+\beta}\bigl(|x-y|^2-s^2\bigr)^{1/2}}
\,dx
&\,\lesssim
\int_s^{2s}
\frac{r^{n-1}}{r^{n-2+\beta}\sqrt{r^2-s^2}}
\,dr \nonumber \\
&\,\sim
\int_s^{2s}
\frac{r^{1-\beta}}{\sqrt{r^2-s^2}}
\,dr
\lesssim
s^{1-\beta}. \label{eq:near-estimate1}
\end{align*}
Since $s>2$ and $|y-c_B|\le 10$, we have
\[
s^{1-\beta}
\;\lesssim\;
\frac{1}{1+|y-c_B|^{n-1}}.
\]

\item[\emph{Case 2.}]
$|y-c_B| \ge 10$.

For $x \in B$ we have $|x-y| \sim |y-c_B|$, and hence
\begin{equation}\label{eq:far-pointwise1}
|x-y|^{-(n-2+\beta)}
\;\lesssim\;
|y-c_B|^{-(n-2+\beta)}.
\end{equation}
Therefore,
\begin{align}
\int_{B}
\frac{\chi_{(s,2s)}(|x-y|)}
{|x-y|^{n-2+\beta}\bigl(|x-y|^2-s^2\bigr)^{1/2}}
\,dx
&\lesssim
\frac{1}{|y-c_B|^{n-2+\beta}}
\int_{B}
\frac{\chi_{(s,2s)}(|x-y|)}
{\sqrt{|x-y|^2-s^2}}
\,dx. \label{eq:far-reduction1}
\end{align}

Let
\[
E := \left\{ \frac{x-y}{|x-y|} : x \in B \right\} \subset S^{n-1}.
\]
A standard geometric argument yields
\[
{\mathcal H}^{n-1}(E) \lesssim |y-c_B|^{-(n-1)}.
\]
Denote by ${\mathcal H}^{n-1}$ the $n-1$-dimensional Hausdorff measure.
Using polar coordinates $(r,\omega)$ with $\omega \in E$, we obtain
\begin{align}
\int_{B}
\frac{\chi_{(s,2s)}(|x-y|)}
{\sqrt{|x-y|^2-s^2}}
\,dx
\nonumber
&\lesssim
\int_E
\int_{s}^{2s}
\frac{r^{n-1}\chi_{(|y-c_B|-1,|y-c_B|+1)}(r)}{\sqrt{r^2-s^2}}
\,dr\,d{\mathcal H}^{n-1}(\omega)  \\
&\lesssim
{\mathcal H}^{n-1}(E)\,|y-c_B|^{n-1}
\lesssim
1. \label{eq:angular-estimate1}
\end{align}
Combining \eqref{eq:far-reduction1}
and
\eqref{eq:angular-estimate1} gives
\[
\int_{B}
\frac{\chi_{(s,2s)}(|x-y|)}
{|x-y|^{n-2+\beta}\bigl(|x-y|^2-s^2\bigr)^{1/2}}
\,dx
\;\lesssim\;
\frac{1}{|y-c_B|^{n+\beta-2}}
\;\lesssim\;
\frac{1}{1+|y-c_B|^{n-1}},
\]
since $s>2$, $\beta \ge 1$ and $|y-c_B|\ge 10$.
\end{enumerate}

This completes the proof.
\end{proof}

\begin{remark}
\label{rem 20260306-1}
Let $k=1,2,\ldots,n-1$.
Let $a,b:(0,\infty)\to\mathbb{R}$ be integrable functions such that
\[
I_0(a) := \int_0^\infty s^{\,n-k-1} |a(s)|\, ds < \infty, 
\qquad
I_0(b) := \int_0^\infty s^{\,n-k-1} |b(s)|\, ds < \infty.
\]
Assume additionally that
\[
I_\beta(a) := \int_0^\infty s^{\beta+n-k-1} |a(s)|\, ds < \infty, 
\qquad
I_\beta(b) := \int_0^\infty s^{\beta+n-k-1} |b(s)|\, ds < \infty
\]for some $\beta>k$. 

Identify $a$ and $b$ with the corresponding radial functions on $\mathbb{R}^{\,n-k}$, and define
\[
j(x) := a * b(x), \qquad 
j_+(x) := |a| * |b|(x), \qquad x \in \mathbb{R}^{\,n-k}.
\]
Note that $j$ and $j_+$ are radial.
We also identify $j$ and $j_+$
naturally with the associated functions on $(0,\infty)$:
$j(t)=j(t,0,0,\ldots,0)$
and
$j_+(t)=j_+(t,0,0,\ldots,0)$
for $t>0$.
\begin{enumerate}
\item 
We have
\begin{align*}
I_0(j_+) := \int_0^\infty s^{\,n-k-1} j_+(s)\, ds
&\sim \int_{\mathbb{R}^{\,n-k}} j_+(x)\, dx \\
&= \|a\|_{L^1({\mathbb R}^{n-k})} \|b\|_{L^1({\mathbb R}^{n-k})} \\
&\sim I_0(a) I_0(b) < \infty.
\end{align*}
Meanwhile
$|j(s)| \le j_+(s)$ for all $s>0$.
by the triangle inequality.
Consequently, 
\[
I_0(j) := \int_0^\infty s^{\,n-k-1} |j(s)|\, ds < \infty.
\]

\item
Condition~\eqref{eq:260118-11} holds almost everywhere for all
$f \in L^1(\mathbb{R}^n) + L^p(\mathbb{R}^n)$ with $p < \frac{n}{k}$.

We deal with the case $k \ge 2$.
Since
\[
(|x-y|^2 - s^2)^{\frac{k}{2}-1} \le |x-y|^{k-2},
\]
we obtain
\begin{align*}
&\int_0^\infty
 \int_{\{|x-y|\ge s\}}
 |f(y)\,a(s)|\, |x-y|^{2-n}
 s^{\,n-k-1} (|x-y|^2 - s^2)^{\frac{k}{2}-1}
 \,dy\,ds \\
&\qquad\le
 \int_0^\infty
 \int_{\mathbb{R}^n}
 \frac{|f(y)|}{|x-y|^{\,n-k}}
 s^{\,n-k-1} |a(s)|
 \,dy\,ds \\
&\qquad=
 I_0(a)\, I_k[|f|](x).
\end{align*}
By the Hardy--Littlewood--Sobolev theorem,
\[
I_k[|f|]
\in
L^{\frac{n}{n-k}}(\mathbb{R}^n)
+
L^{\frac{np}{n-kp}}(\mathbb{R}^n),
\]
hence $I_k[|f|](x) < \infty$ for almost every $x \in \mathbb{R}^n$.
Therefore, condition~\eqref{eq:260118-11} is satisfied almost everywhere.

The case $k=1$ is more difficult to handle.
Proceeding as above, by replacing $s$ with $2s$, we have
\begin{align*}
&\int_0^\infty
 \int_{\{|x-y|\ge 2s\}}
 |f(y)\,a(s)|\, |x-y|^{2-n}
 s^{\,n-2} (|x-y|^2 - s^2)^{-1/2}
 \,dy\,ds \\ 
&\qquad\le2
 \int_0^\infty
 \int_{\mathbb{R}^n}
 \frac{|f(y)|}{|x-y|^{\,n-1}}
 s^{\,n-2} |a(s)|
 \,dy\,ds \\ 
&\qquad=2
 I_0(a)\, I_1[|f|](x),
\end{align*}
using the estimate
\[
(|x-y|^2 - s^2)^{-1/2} \le 2|x-y|^{-1},
\qquad
s < \tfrac12 |x-y|.
\]

It remains to show that
\begin{align}\label{eq:260226-95}
&\int_0^\infty
 \int_{\{2s \ge |x-y| \ge s\}}
 |f(y)\,a(s)|\, |x-y|^{2-n}
 s^{\,n-2} (|x-y|^2 - s^2)^{-1/2}
 \,dy\,ds
 < \infty
\end{align}
for almost all $x \in {\mathbb R}^n$.
This reduces to proving finiteness of the two integrals
\begin{align*}
&\int_0^\infty
 \int_{\{2s \ge |x-y| \ge s \ge 2\}}
 |f(y)\,a(s)|\, |x-y|^{2-n}
 s^{\,n-2} (|x-y|^2 - s^2)^{-1/2}
 \,dy\,ds, 
\\[0.5ex]
&\int_0^\infty
 \int_{\{4 \ge 2s \ge |x-y| \ge s\}}
 |f(y)\,a(s)|\, |x-y|^{2-n}
 s^{\,n-2} (|x-y|^2 - s^2)^{-1/2}
 \,dy\,ds.
\end{align*}

Let $B$ be a ball of radius $1$.
We seek to show that the integration of the left-hand side
of \eqref{eq:260226-95} is finite.
Since
$f \in L^1({\mathbb R}^n){+L^p({\mathbb R}^n)}$ and $I_1[|f|](x)<\infty$ almost everywhere,
we have
\[
\int_{{\mathbb R}^n}\frac{|f(y)|}{1+|y-c_B|^{n-1}}\,dy<\infty
\]
according to \cite[Lemma 180]{book}.
By Lemma~\ref{lem:ball-integral-estimate},
\begin{align*}
&\int_B
 \int_0^\infty
 \int_{\{ 4\ge 2s \ge |x-y| \ge s\}}
 |f(y)\,a(s)|\, |x-y|^{2-n}
 s^{\,n-2} (|x-y|^2 - s^2)^{-1/2}
 \,dy\,ds\,dx \\ 
&\qquad\lesssim
I_0(a)
 \int_{\mathbb{R}^n}
 \frac{|f(y)|}{1+|y-c_B|^{\,n-1}}
 \,dy
 < \infty.
\end{align*}
Recall that $\beta \ge 1$.
Likewise, by Lemma~\ref{lem:ball-integral-estimate1},
\begin{align*}
&\int_B
 \int_0^\infty
 \int_{\{2s \ge |x-y| \ge s \ge 2\}}
 |f(y)\,a(s)|\, |x-y|^{2-n}
 s^{\,n-2} (|x-y|^2 - s^2)^{-1/2}
 \,dy\,ds\,dx \\ 
&\qquad\lesssim
 \int_B
 \int_0^\infty
 \int_{\{2s \ge |x-y| \ge s \ge 2\}}
 |f(y)\,a(s)|\, |x-y|^{-n+2-\beta}
 s^{n-2+\beta} (|x-y|^2 - s^2)^{-1/2}
 \,dy\,ds\,dx \\ 
&\qquad\lesssim
I_\beta(a)
 \int_{\mathbb{R}^n}
 \frac{|f(y)|}{1+|y-c_B|^{\,n-1}}
 \,dy
 < \infty.
\end{align*}

Consequently, for almost every $x \in \mathbb{R}^n$,
\begin{align*}
&\int_0^\infty
 \int_{\{2s \ge |x-y| \ge s\}}
 |f(y)\,a(s)|\, |x-y|^{2-n}
 s^{\,n-2} (|x-y|^2 - s^2)^{-1/2}
 \,dy\,ds
 < \infty.
\end{align*}
This proves that
\[
\int_0^\infty
\int_{\{|x-y|\ge s\}}
|f(y)\,a(s)|\, |x-y|^{2-n}
s^{\,n-k-1} (|x-y|^2 - s^2)^{\frac{k}{2}-1}
\,dy\,ds
\]
is finite for almost every $x$ in the case $k=1$.
\item
Arguing similarly,
\begin{align*}
I_\beta(j_+) &:= \int_0^\infty s^{\,\beta+n-k-1} j_+(s)\, ds 
\sim \int_{\mathbb{R}^{\,n-k}} |x|^\beta j_+(x)\, dx \\
&\lesssim \int_{\mathbb{R}^{\,n-k}} \int_{\mathbb{R}^{\,n-k}} (|x-y|^\beta + |y|^\beta) |a(y)b(x-y)|\, dy\, dx \\
&\sim I_\beta(a) I_0(b) + I_0(a) I_\beta(b) < \infty.
\end{align*}
Hence, if $a$ and $b$ are admissible, so is $a*b$.

\item
Let $a,b$ be admissible functions.

\begin{enumerate}
\item
For $t>0$ and measurable $f:\mathbb{R}^n \to \mathbb{C}$, define the generalized projection operators (smoothed $k$-plane Radon transforms)
\begin{align}
A_t f(\tau) &= t^{\,k-n} \int_{\mathbb{R}^n} f(x)\, a\!\left(\frac{|x-\tau|}{t}\right) dx, \\
A_t^+ f(\tau) &= t^{\,k-n} \int_{\mathbb{R}^n} f(x)\, \left|a\!\left(\frac{|x-\tau|}{t}\right)\right| dx, \\
J_t f(\tau) &= t^{\,k-n} \int_{\mathbb{R}^n} f(x)\, j\!\left(\frac{|x-\tau|}{t}\right) dx, \\
J_t^+ f(\tau) &= t^{\,k-n} \int_{\mathbb{R}^n} f(x)\, j_+\!\left(\frac{|x-\tau|}{t}\right) dx,
\end{align}
for $\tau \in {\mathcal G}_{n,k}$.

\item
For $t>0$ and measurable $\varphi:{\mathcal G}_{n,k}\to\mathbb{C}$, define the generalized backprojection (dual $k$-plane) operators
\begin{align}
B_t^* \varphi(x) &= t^{\,k-n} \int_{{\mathcal G}_{n,k}} \varphi(\tau)\, b\!\left(\frac{|x-\tau|}{t}\right) d\mu_{{\mathcal G}_{n,k}}(\tau), \\
B_t^{*,+} \varphi(x) &= t^{\,k-n} \int_{{\mathcal G}_{n,k}} \varphi(\tau)\, \left|b\!\left(\frac{|x-\tau|}{t}\right)\right| d\mu_{{\mathcal G}_{n,k}}(\tau),
\end{align}
for $x \in \mathbb{R}^n$.

Arguing as in Rubin~\cite[(3.20)]{Rubin2004}, we have
\[
B_t^{*,+} A_t^+ [|f|](x) = J_t^+ \widehat{|f|}(x),
\]
which is finite for almost all $x \in \mathbb{R}^n$ if $f \in L^1({\mathbb R}^n) + L^p({\mathbb R}^n)$, $p < \frac{n}{k}$. In particular, $A_t f$ satisfies \eqref{eq:260118-12} almost everywhere, and
\[
B_t^* A_t f = J_t^+ \hat{f}
\]
for all $f \in L^1({\mathbb R}^n) + L^p({\mathbb R}^n)$ with $p < \frac{n}{k}$.
\end{enumerate}
\end{enumerate}
\end{remark}
\subsection{An observation on an averaging lemma}
\label{s2.5}

In many function spaces arising in harmonic analysis, such as
variable exponent Lebesgue spaces
or weighted spaces, the norm is not translation invariant,
and convolution with a general $L^1$ kernel cannot be treated by classical
arguments based on Young's inequality.
Nevertheless, these spaces are typically stable under local averaging, namely,
under convolution with normalized characteristic functions of balls.
The purpose of the following lemma is to show that this weaker form of
averaging stability already suffices to control convolution with radial
decreasing kernels and to construct an approximation of the identity.
The proof relies on the layer--cake representation
of the function $K$, which reduces the problem
to finite superpositions of ball averages.
Recall that $c_n$ is a constant in \eqref{eq:260206-21}.
\begin{lemma}\label{lem:260119-11}
Let $X \in \mathcal A$ be a Banach lattice of measurable functions on
$\mathbb R^n$ satisfying the assumptions of
Theorem~\ref{thm:wavelet-reconstruction}.
In particular, assume that $C_{\mathrm c}(\mathbb R^n)$ is dense in $X$.

Let $K:\mathbb R^n \to [0,\infty)$ be a radial, decreasing, integrable function
and define
\[
K_\varepsilon(x) := \varepsilon^{-n} K(x/\varepsilon).
\]
Then the following statements hold:
\begin{enumerate}
\item[$(1)$]
For all $f \in X$,
\[
\|K * f\|_X
\le
c_n [X]_{\mathcal A}\, \|K\|_{L^1(\mathbb R^n)}\, \|f\|_X .
\]
\item[$(2)$]
For all $f \in X$,
\[
\lim_{\varepsilon \downarrow 0}
\|K_\varepsilon * f - \|K\|_{L^1} f\|_X = 0 .
\]
\end{enumerate}
\end{lemma}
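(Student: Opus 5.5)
The plan is to prove (1) by a layer--cake decomposition of $K$ into ball averages and then to obtain (2) by the standard density argument.

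\emph{Step 1: reduction to ball averages.} Since $K$ is radial, decreasing and nonnegative, each superlevel set $\{K>\lambda\}$ is a centred ball $B(0,r(\lambda))$, possibly open or closed; the boundary sphere is null, so this is irrelevant. The layer--cake formula gives
\[
K(x)=\int_0^\infty \chi_{B(0,r(\lambda))}(x)\,d\lambda
=\int_0^\infty |B(0,r(\lambda))|\,\frac{\chi_{B(0,r(\lambda))}(x)}{|B(0,r(\lambda))|}\,d\lambda ,
\]
with $\int_0^\infty |B(0,r(\lambda))|\,d\lambda=\|K\|_{L^1}$. By Tonelli, for $f\ge0$,
\[
K*f(x)=\int_0^\infty |B(0,r(\lambda))|\,\fint_{B(x,r(\lambda))} f(y)\,dy\,d\lambda .
\]
For general $f$ we use $|K*f|\le K*|f|$ and the lattice property (L), so it suffices to bound $\|K*f\|_X$ for $f\ge0$.

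\emph{Step 2: passing the norm inside the $\lambda$-integral.} The norm cannot be moved inside an integral directly, so we discretize. Approximate the $\lambda$-integral from below by Riemann-type sums, that is, by finite superpositions $\sum_i c_i \fint_{B(\cdot,r_i)}f$ with $\sum_i c_i |B(0,r_i)|\le\|K\|_{L^1}$. These increase pointwise to $K*f$; one choice is to truncate $K$ by dyadic level sets, $K_m=\sum_j 2^{-m}\chi_{\{K>j2^{-m}\}}$, restricted to $j\le 4^m$. For each such finite sum, the triangle inequality and \eqref{eq:260206-21} give the bound $c_n[X]_{\mathcal A}\|K\|_{L^1}\|f\|_X$. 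The Fatou property (F), which is part of the definition of a Banach function space, then transfers the bound to $K*f$. I expect this to be the main technical point: one must check the monotone convergence $K_m\uparrow K$ almost everywhere and that the truncated sums are genuinely finite combinations of ball averages.

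\emph{Step 3: approximation of the identity.} Scaling gives $\|K_\varepsilon\|_{L^1}=\|K\|_{L^1}$, and $K_\varepsilon$ is again radial and decreasing, so (1) yields the uniform bound $\|K_\varepsilon*f\|_X\le c_n[X]_{\mathcal A}\|K\|_{L^1}\|f\|_X$. Given $\delta>0$, choose $g\in C_{\rm c}$ with $\|f-g\|_X<\delta$, which is possible by the density assumption. Then
\[
\|K_\varepsilon*f-\|K\|_{L^1}f\|_X\le (c_n[X]_{\mathcal A}+1)\|K\|_{L^1}\delta+\|K_\varepsilon*g-\|K\|_{L^1}g\|_X .
\]
For $g\in C_{\rm c}$, the difference $K_\varepsilon*g-\|K\|_{L^1}g$ tends to $0$ uniformly by uniform continuity of $g$. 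It is also dominated by a fixed function: for $\varepsilon\le1$ and $\operatorname{supp}g\subset B(0,R)$,
\[
|K_\varepsilon*g|\le\|g\|_\infty\,\chi_{B(0,2R)}\|K\|_{L^1}+\|g\|_\infty\,\bigl(K_\varepsilon*\chi_{B(0,R)}\bigr)\chi_{B(0,2R)^c}.
\]
To finish, split $X$-norms into the near part $B(0,2R)$ and the far part:
\begin{itemize}
\item \emph{Near part.} Uniform convergence times $\|\chi_{B(0,2R)}\|_X$ handles it; this is finite because $X\in\mathcal A$ forces $\chi_Q\in X$.
\item \emph{Far part.} $K_\varepsilon*\chi_{B(0,R)}$ off $B(0,2R)$ is bounded by $|B(0,R)|\,K_\varepsilon$ restricted to $|x|\ge R$. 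We need its $X$-norm to tend to $0$.
\end{itemize}
For the far part I would first try to write the tail $K_\varepsilon\chi_{\{|x|\ge R\}}$ as $\widetilde K_\varepsilon*\chi_{B(0,R/2)}$ up to constants, where $\widetilde K_\varepsilon$ is the radial decreasing kernel $K_\varepsilon\chi_{\{|\cdot|\ge R/2\}}$ extended by the constant $K_\varepsilon(R/2)$ inside the ball. Part (1) then bounds its norm by $\|\widetilde K_\varepsilon\|_{L^1}\|\chi_{B(0,R/2)}\|_X$, and $\|\widetilde K_\varepsilon\|_{L^1}=\int_{|x|\ge R/(2\varepsilon)}K+|B(0,R/2)|\,K_\varepsilon(R/2)\to0$, since $r^nK(r)\to0$ for integrable radial decreasing $K$. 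This is the step I expect to require the most care.
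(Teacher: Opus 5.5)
Your proof is correct. Part (1) uses the same idea as the paper: approximate $K$ from below by dyadic layer-cake sums, which are finite nonnegative combinations of indicators of centred balls, then use the triangle inequality with the ball-averaging bound, and finish with (F).

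Your way of invoking (F) is the sound one. You apply it directly to $0\le K_m*f\uparrow K*f$ for $f\ge 0$. The paper instead applies (F) to $(K_{N'}-K_N)*f$ and asserts that $K_{N'}-K_N$ is again a nonnegative combination of centred ball indicators. That is false in general, because these differences live on annuli: for $\kappa=\chi_{[0,1]}+\frac34\chi_{(1,2]}$ one gets $K_2-K_1=\frac14\chi_{\{1<|x|\le 2\}}$. Your dyadic sum must start at $j=1$, since $\{K>0\}$ may be all of $\mathbb R^n$.

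For part (2) the two routes genuinely differ.
\begin{itemize}
\item \emph{The paper} proves (2) only for the compactly supported kernels $K_N$. For $g\in C_{\rm c}$, all $(K_N)_\varepsilon*g$ with small $\varepsilon$ lie in one compact set $L$, so uniform convergence plus $C_{\rm c}(L)\hookrightarrow X$ suffices. It then passes to $K$ through an $\varepsilon$-uniform bound on $(K-K_N)_\varepsilon*f$. This needs operator bounds for the non-monotone kernel $K-K_N$, exactly where the annulus issue bites. A repair is $K-K_N\le \min\{K,2^{-N}\}+(K-N)_+$, where both terms are radial decreasing with $L^1$ norms tending to $0$.
\item \emph{You} treat the general $K$ directly. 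You pay with a tail estimate, obtained by applying (1) once more to an auxiliary radial decreasing kernel of vanishing $L^1$ norm, using $r^n\kappa(r)\to 0$ where $K(x)=\kappa(|x|)$. So every use of (1) stays within radial decreasing kernels.
\end{itemize}

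One inaccuracy in your far part. For $|x|\ge 2R$ you only have
\[
K_\varepsilon*\chi_{B(0,R)}(x)\le |B(0,R)|\,K_\varepsilon(x/2)=2^n|B(0,R)|\,K_{2\varepsilon}(x),
\]
not $|B(0,R)|\,K_\varepsilon(x)$. This is harmless: replace $\varepsilon$ by $2\varepsilon$.

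You can also skip the pointwise tail and the comparison with $\widetilde K_\varepsilon*\chi_{B(0,R/2)}$. If $|x|\ge 2R$ and $|y|<R$, then $|x-y|>R$. Hence on $\{|x|\ge 2R\}$ one has $K_\varepsilon*\chi_{B(0,R)}=\min\{K_\varepsilon,\varepsilon^{-n}\kappa(R/\varepsilon)\}*\chi_{B(0,R)}$. Part (1) applies directly to this kernel, whose $L^1$ norm is $\int_{\{|z|\ge R/\varepsilon\}}K+|B(0,1)|(R/\varepsilon)^n\kappa(R/\varepsilon)\to 0$.
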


Lemma~\ref{lem:260119-11}(1) may be viewed as an $n$-dimensional analogue of a
result of Vinogradov \cite[Lemma~2.2]{Vinogradov2024}.
Although the proof amounts to a careful inspection of that argument, we include
it here for the sake of completeness.

\begin{proof}
We may assume that $K \neq 0$; otherwise the conclusion is trivial.
Because $K$ is radial and decreasing, there exists a decreasing function
$\kappa : [0,\infty) \to [0,\infty)$ such that $K(x) = \kappa(|x|)$. 

For any $x\in{\mathbb R}^n$, we have 
\[
K(x) = \int_0^{\kappa(|x|)} 1\,dt = \int_0^{\infty} \chi_{ [0,\kappa(|x|)]} (t)\,dt. 
\]
For $N\in{\mathbb N}$, define 
\[
K_N(x) = 2^{-N} \sum_{l=1}^{N \cdot 2^N}  \chi_{ [0,\kappa(|x|)]}(2^{-N}l) = 2^{-N} \sum_{l=1}^{N \cdot 2^N} \chi_{\{y\,:\, l \le  2^N\kappa(|y|)\}} (x), 
\] 
which is the right-endpoint Riemann sum for $\displaystyle \int_0^{N} \chi_{ [0,\kappa(|x|)]} (t)\,dt$. 
Since 
\[
 \sum_{l=1}^{N \cdot 2^N} \chi_{\{y\,:\, l \le 2^N\kappa(|y|)\}} (x) = \#\{l\in\{1,2,\ldots,N \cdot 2^N\}\,:\, l  \le
 2^N\kappa(|x|)\}, 
\]
it follows that 
\[
K_N(x) = 2^{-N} \min\{ [2^N \kappa(|x|)], 2^{N}\cdot N\} = 2^{-N} [2^N\min\{ \kappa(|x|), N\}]. 
\]

By the elementary properties of the floor function $[\,\cdot\,]$, we obtain $K_N(x) \le K_{N+1}(x)$ and 
\[
\min\{ \kappa(|x|), N\} -2^{-N} <K_N(x) \le \min\{ \kappa(|x|), N\}. 
\]
Therefore, $K_N(x) \uparrow K(x)$ for any $x\in{\mathbb R}^n$. 
Moreover, since $0\le K_N(x)\le K(x)$ and $K\in L^1(\mathbb R^n)$,
the dominated convergence theorem implies
\[
\|K_N-K\|_{L^1(\mathbb R^n)}\to 0.
\]

To prove (1) and (2), we define 
\[
r(t) := \sup\{ r>0 : \kappa(r) \ge t \}.
\]
Then 
\[
K_N(x) =  2^{-N} \sum_{l=1}^{N \cdot 2^N} \chi_{B(0, r(2^{-N}l))} (x)
\]
almost everywhere. 
In particular, for each $N$, $K_N$ is of the form 
\[
\sum_{l=1}^M a_l \chi_{B(0, r_l)}, \ \ a_l \ge 0. 
\]
Moreover, if $N<N'$,  the non-negative function $K_{N'}-K_N$ is again of this form.

Once assertions (1) and (2) are proved with $K$ replaced by $K_N$, the passage
to the limit $N \to \infty$ follows easily.
Indeed, for $N' > N$, the difference $K_{N'} - K_N$ has the same structure as
$K_N$, and
\[
\lim_{N' \to \infty} \|K_{N'} - K_N\|_{L^1} = 0 .
\]
By the Fatou property of $X$,
\[
\|(K - K_N) * f\|_X
\le
\liminf_{N' \to \infty} \|(K_{N'} - K_N) * f\|_X
\le
c_n [X]_{\mathcal A}\, 
\lim_{N' \to \infty}
\|K_{N'} - K_N\|_{L^1}\, \|f\|_X ,
\]
for all $f \in X$.
Thus it suffices to prove both assertions for kernels of the form
\[
K = \sum_{l=1}^N a_l\, \chi_{B(0,r_l)} .
\]

\begin{enumerate}
\item
Let $f \in X$ with $f \ge 0$.
Then
\[
K * f
=
\sum_{l=1}^N a_l\, (\chi_{B(0,r_l)} * f).
\]
Since $X \in \mathcal A$, we have by \eqref{eq:260206-21}
\[
\Bigl\|
\frac{1}{|B(0,r)|}\, \chi_{B(0,r)} * f
\Bigr\|_X
\le
c_n [X]_{\mathcal A}\, \|f\|_X .
\]
Consequently,
\[
\|\chi_{B(0,r_l)} * f\|_X
\le
c_n [X]_{\mathcal A}\, |B(0,r_l)|\, \|f\|_X .
\]
Using positivity and the triangle inequality in $X$, we obtain
\begin{align*}
\|K * f\|_X
&\le
\sum_{l=1}^N a_l\, \|\chi_{B(0,r_l)} * f\|_X\\
&\le
c_n [X]_{\mathcal A}
\sum_{l=1}^N a_l\, |B(0,r_l)|\, \|f\|_X\\
&=
c_n [X]_{\mathcal A}\, \|K\|_{L^1}\, \|f\|_X .
\end{align*}
\item
Let
\[
K = \sum_{l=1}^N a_l\, \chi_{B(0,r_l)}
\]
as in Step~1, and define $K_\varepsilon(x) = \varepsilon^{-n} K(x/\varepsilon)$.
Then
\[
K_\varepsilon(x)
=
\varepsilon^{-n}\sum_{l=1}^N a_l\, \chi_{B(0,\varepsilon r_l)}(x).
\]
By part~(1), the operators $f \mapsto K_\varepsilon * f$ are uniformly bounded
on $X$.
Since $C_{\mathrm c}(\mathbb R^n)$ is dense in $X$, it suffices to consider
$f \in C_{\mathrm c}(\mathbb R^n)$.
The general case $f \in X$ follows by density and the uniform bound established
in part~(1).

For such $f$,
\[
\frac{1}{|B(0,\varepsilon r)|}\,
\chi_{B(0,\varepsilon r)} * f
\to f
\quad \text{uniformly as } \varepsilon \downarrow 0 .
\]
Since
\[
\sum_{l=1}^N a_l\, |B(0,r_l)|
=
\|K\|_{L^1}
=
\|K_{\varepsilon}\|_{L^1}, 
\]
it follows that
\[
\lim_{\varepsilon \downarrow 0}
\sup_{x \in \mathbb R^n}
\left|
K_\varepsilon * f(x)
-
\|K_\varepsilon\|_{L^1}\, f(x)
\right|
=
\lim_{\varepsilon \downarrow 0}
\sup_{x \in \mathbb R^n}
\left|
K_\varepsilon * f(x)
-
\sum_{l=1}^N a_l\, |B(0,r_l)|\, f(x)
\right|
= 0 .
\]
Thus,
we conclude that $K_\varepsilon * f \to \|K\|_{L^1} f$ uniformly.
For $\varepsilon$ sufficiently small, the supports of $K_\varepsilon * f$
are contained in a fixed compact set $L$.
Using the continuous embedding $C_{\mathrm c}(L) \hookrightarrow X$,
which is a consequence of $({\rm L})$ and $({\rm BSi})$, we obtain
\[
\|K_\varepsilon * f - \|K\|_{L^1} f\|_X \to 0 .
\]
\end{enumerate}

\end{proof}

\begin{lemma}\label{lem:260119-81}
Let $w$ be an admissible function and define
\[
\psi(r)
=
c_{k,n}\,r^{2-n}
\int_0^{r}
s^{n-k-1}
w(s)\,
(r^2-s^2)^{\frac{k}{2}-1}
\,ds 
\]
for $r>0$. 
Here $c_{k,n}$ is the same constant as in \eqref{eq:psi-w-relation}. 
\begin{enumerate}
\item[$(1)$]
Let $k \ge 2$. Then
\[
|\psi(r)| = O(r^{k-n})
\qquad\text{as } r \downarrow 0 .
\]

\item[$(2)$]
Let $k \ge 1$. Then for every $\varepsilon>0$,
\begin{equation}\label{eq:260119-712}
\int_{\varepsilon}^{\infty}
\left|
\psi\!\left(\frac{|x|}{t}\right)
\right|
\frac{dt}{t^{n+1}}
\lesssim
\frac{C_\varepsilon}{|x|^{\,n-k}} .
\end{equation}
\end{enumerate}
\end{lemma}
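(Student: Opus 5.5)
For (1), with $k\ge 2$ the exponent $\frac{k}{2}-1\ge 0$, so for $0<s<r$ we have $(r^2-s^2)^{\frac k2-1}\le r^{k-2}$. Hence
\[
|\psi(r)|\le c_{k,n}\,r^{2-n}\,r^{k-2}\int_0^{r}s^{n-k-1}|w(s)|\,ds
\le c_{k,n}\,r^{k-n}\int_0^{1}s^{n-k-1}|w(s)|\,ds
\]
for $0<r\le 1$, and the last integral is finite by \eqref{eq:260118-1}. This is the whole argument; it gives $|\psi(r)|=O(r^{k-n})$ as $r\downarrow 0$.

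For (2), I would first derive a global bound $|\psi(r)|\lesssim r^{k-n}$ for all $r>0$, valid also when $k=1$. Then the claim follows by integrating:
\[
\int_\varepsilon^\infty \Bigl|\psi\Bigl(\frac{|x|}{t}\Bigr)\Bigr|\frac{dt}{t^{n+1}}
\lesssim |x|^{k-n}\int_\varepsilon^\infty t^{n-k}\,\frac{dt}{t^{n+1}}
=\frac{\varepsilon^{-k}}{k}\,|x|^{k-n},
\]
so $C_\varepsilon=\varepsilon^{-k}/k$.

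For $k\ge 2$ the bound from (1) holds for every $r$, since $\int_0^\infty s^{n-k-1}|w(s)|\,ds<\infty$ by \eqref{eq:260118-1} together with \eqref{eq:radial-decay-condition}. For $k=1$ the kernel $(r^2-s^2)^{-1/2}$ is singular at $s=r$, so I would split the integral at $s=r/2$:
\begin{itemize}
\item On $(0,r/2)$ we have $(r^2-s^2)^{-1/2}\le 2r^{-1}$, which contributes $\lesssim r^{1-n}I_0(w)$.
\item On $(r/2,r)$ a direct bound needs a pointwise control of $w$ near $s=r$, which is not available.
\end{itemize}
This second piece is the main obstacle.

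To avoid it, I would not insist on a pointwise bound for $\psi$. Instead I would substitute $r=|x|/t$ and use Fubini to exchange the $t$- and $s$-integrals before estimating. Taking absolute values, the left side of \eqref{eq:260119-712} is at most
\[
c\int_0^\infty s^{n-2}|w(s)|\int_{\varepsilon}^{|x|/s}\Bigl(\frac{|x|}{t}\Bigr)^{2-n}\Bigl(\frac{|x|^2}{t^2}-s^2\Bigr)^{-1/2}\frac{dt}{t^{n+1}}\,ds .
\]
The inner integral is
\[
|x|^{2-n}\int_\varepsilon^{|x|/s} t^{-2}\bigl(|x|^2-s^2t^2\bigr)^{-1/2}\,dt ,
\]
and the singularity at $t=|x|/s$ is of square-root type, so it is integrable. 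With $t=|x|u/s$ this becomes
\[
|x|^{2-n}\,\frac{s}{|x|^{2}}\int_{\varepsilon s/|x|}^{1}u^{-2}(1-u^2)^{-1/2}\,du
\lesssim |x|^{-n}\,s\cdot\frac{|x|}{\varepsilon s}
=\varepsilon^{-1}|x|^{1-n},
\]
where the integral is bounded by splitting at $u=1/2$: near $u=1$ the factor $(1-u^2)^{-1/2}$ is integrable, and away from $1$ the integral is $\lesssim |x|/(\varepsilon s)$. Multiplying by $\int_0^\infty s^{n-2}|w(s)|\,ds<\infty$ gives the claim with $k=1$, $n-k=n-1$.

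The same Fubini computation works for every $k$, so it gives a uniform proof of (2). The step needing the most care is this Fubini-plus-substitution estimate of the inner integral; everything else is a direct bound.
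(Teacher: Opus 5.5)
Your proof is correct and follows essentially the paper's argument. Part (1) is the same kernel bound $(r^2-s^2)^{\frac k2-1}\le r^{k-2}$. For part (2), the paper also exchanges the $t$- and $s$-integrals first and then treats the two cases as follows.
\begin{itemize}
\item For $k\ge 2$ it bounds the kernel the same way, which is equivalent to your integrating the global bound $|\psi(r)|\lesssim r^{k-n}$.
\item For $k=1$ it evaluates the inner integral exactly via the antiderivative $-|x|^{-2}\bigl((|x|/t)^2-s^2\bigr)^{1/2}$. Your substitution gives the same value, since $\int_a^1 u^{-2}(1-u^2)^{-1/2}\,du=\sqrt{1-a^2}/a\le 1/a$.
\end{itemize}
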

\begin{proof}
\begin{enumerate}
\item
Let $k \ge 2$.
Since $0 \le s \le r$, we have
\[
(r^2 - s^2)^{\frac{k}{2}-1} \le r^{k-2}.
\]
Hence, by the triangle inequality,
\begin{align*}
|\psi(r)|
&\le
c_{k,n}\, r^{2-n}
\int_0^{r}
s^{n-k-1}
|w(s)|
(r^2 - s^2)^{\frac{k}{2}-1}
\,ds \\
&\le
c_{k,n}\, r^{k-n}
\int_0^{r}
s^{n-k-1}
|w(s)|
\,ds \\
&\le
c_{k,n}\, r^{k-n}
\int_0^{\infty}
s^{n-k-1}
|w(s)|
\,ds .
\end{align*}
Since $w$ is admissible, the last integral is finite, which proves
\[
|\psi(r)| = O(r^{k-n})
\quad \text{as } r \downarrow 0 .
\]

\item
We prove \eqref{eq:260119-712}. By definition of $\psi$ and
the triangle inequality,
\begin{align*}
\int_{\varepsilon}^{\infty}
\left|
\psi\!\left(\frac{|x|}{t}\right)
\right|
\frac{dt}{t^{n+1}}
&\lesssim
\int_{\varepsilon}^{\infty}
\left(\frac{|x|}{t}\right)^{2-n}
\int_0^{\frac{|x|}{t}}
s^{n-k-1}
|w(s)|
\left(
\left(\frac{|x|}{t}\right)^2 - s^2
\right)^{\frac{k}{2}-1}
\,ds\,
\frac{dt}{t^{n+1}} .
\end{align*}
Interchanging the order of integration yields
\begin{align*}
\int_{\varepsilon}^{\infty}
\left|
\psi\!\left(\frac{|x|}{t}\right)
\right|
\frac{dt}{t^{n+1}}
&\lesssim
|x|^{2-n}
\int_0^{\frac{|x|}{\varepsilon}}
s^{n-k-1}
|w(s)|
\int_{\varepsilon}^{\frac{|x|}{s}}
\frac{1}{t^{3}}
\left(
\left(\frac{|x|}{t}\right)^2 - s^2
\right)^{\frac{k}{2}-1}
\,dt\, ds .
\end{align*}

If $k \ge 2$, then
\[
\left(
\left(\frac{|x|}{t}\right)^2 - s^2
\right)^{\frac{k}{2}-1}
\le
\left(\frac{|x|}{t}\right)^{k-2},
\]
and the inner integral is bounded by
\[
\int_{\varepsilon}^{\infty}
\frac{dt}{t^{k+1}}
\lesssim
\varepsilon^{-k}.
\]
Hence,
arguing as before, we have
\[
\int_{\varepsilon}^{\infty}
\left|
\psi\!\left(\frac{|x|}{t}\right)
\right|
\frac{dt}{t^{n+1}}
\lesssim
\frac{1}{|x|^{\,n-k}} .
\]

If $k = 1$, then for $0 < s < \frac{|x|}{\varepsilon}$ we estimate
\begin{align*}
\int_{\varepsilon}^{\frac{|x|}{s}}
\frac{1}{t^3}
\left(
\left(\frac{|x|}{t}\right)^2 - s^2
\right)^{-\frac12}
\,dt
&=
\left[
-\frac{1}{|x|^2}
\left(
\left(\frac{|x|}{t}\right)^2 - s^2
\right)^{\frac12}
\right]_{\varepsilon}^{\frac{|x|}{s}} \\
&=
\frac{1}{|x|^2}
\left(
\left(\frac{|x|}{\varepsilon}\right)^2 - s^2
\right)^{\frac12}
\le 
\frac{1}{\varepsilon} \cdot
\frac{1}{|x|}.
\end{align*}
Therefore,
\[
\int_{\varepsilon}^{\infty}
\left|
\psi\!\left(\frac{|x|}{t}\right)
\right|
\frac{dt}{t^{n+1}}
\lesssim 
\frac{1}{\varepsilon} \cdot
|x|^{1-n}
\int_0^{\infty}
s^{n-2}
|w(s)|
\,ds
\lesssim 
\frac{1}{\varepsilon} \cdot
\frac{1}{|x|^{\,n-1}} .
\]
This completes the proof.
\end{enumerate}
\end{proof}

\section{Proof}
\label{s3}

\subsection{Proof of Theorem \ref{thm:wavelet-reconstruction}}
\label{s3.1}
The almost everywhere convergence follows from the assumption
$f \in L^{1}+L^{p}$.
Indeed, the pointwise convergence has already been established in
\cite{Rubin2004}.
Therefore, we restrict our attention to the convergence in norm.

Fix $t>0$.
Observe that
\begin{align*}
\lefteqn{
c_{k,n}\,r^{2-n}
\int_0^{r}
s^{n-k-1}
w\!\left(\frac{s}{t}\right)\,
(r^{2}-s^{2})^{\frac{k}{2}-1}
\,ds
}\\
&=
c_{k,n}\,t\,r^{2-n}
\int_0^{\frac{r}{t}}
(s t)^{n-k-1}
w\!\left(s\right)\,
(r^{2}-s^{2}t^2)^{\frac{k}{2}-1}
\,ds\\
&=
c_{k,n}\,\left(\frac{r}{t}\right)^{2-n}
\int_0^{\frac{r}{t}}
s^{n-k-1}
w(s)\,
\left(\frac{r^{2}}{t^{2}}-s^{2}\right)^{\frac{k}{2}-1}
\,ds\\
&=\psi\!\left(\frac{r}{t}\right).
\end{align*}

Since $f \in L^{1}+L^{p}$, \eqref{eq:260118-11} is satisfied with
$a = w(\cdot/t)$. 
See Remark \ref{rem 20260306-1}.
Hence, by \eqref{eq:psi-w-relation} and \eqref{eq:identity-i}, we obtain
\begin{equation}\label{eq:W-star-hatf}
W_t^{*}\hat f(x)
=
\frac{1}{t^{n}}
\int_{\mathbb{R}^{n}}
f(y)\,
\psi\!\left(\frac{|x-y|}{t}\right)
\,dy.
\end{equation}

Since we are assuming that $\psi$ admits a decreasing majorant,
we have the desired result by Lemma \ref{lem:260119-11}.
\subsection{Proof of Theorem \ref{thm:wavelet-reconstruction1}}
\label{s3.1}
The proof of Theorem \ref{thm:wavelet-reconstruction1}
follows the line of \cite[Theorem 3.2]{Rubin2004}.
For the sake of self-containedness, we supply the proof.
For $\varepsilon>0$, define
\begin{equation}\label{eq:tilde-psi-epsilon}
\tilde\psi_{\varepsilon}(x)
:=
\int_{\varepsilon}^{\infty}
\psi\!\left(\frac{|x|}{t}\right)
\,\frac{dt}{t^{n+1}}
=
\frac{1}{\varepsilon^{n}}
\tilde\psi\!\left(\frac{x}{\varepsilon}\right),
\end{equation}
where $\tilde\psi$ is given by \eqref{eq:260119-26}.
We first
verify the convolution
representation 
\begin{equation}\label{eq:epsilon-convolution}
\int_{\varepsilon}^{\infty} W_t^* \hat f(x) \,\frac{dt}{t}
=
\int_{\mathbb{R}^n} f(y)\, \tilde\psi_\varepsilon(x-y)\,dy,
\quad x\in \mathbb{R}^n, \; \varepsilon>0.
\end{equation}
For almost every $x\in{\mathbb R}^n$,  $I_k(|f|)(x)<\infty$
by the Hardy--Littlewood--Sobolev theorem.
By Lemma~\ref{lem:260119-81},
\begin{equation}\label{eq:260119-71}
\int_{\varepsilon}^{\infty}
\left|
\psi\!\left(\frac{|x|}{t}\right)
\right|
\frac{dt}{t^{n+1}}
\lesssim
\frac{C_\varepsilon}{|x|^{\,n-k}}.
\end{equation}
Hence, for almost all $x \in {\mathbb R}^n$,
Fubini's theorem applies, and using \eqref{eq:W-star-hatf} we compute
\begin{align*}
\int_{\varepsilon}^{\infty}
W_t^*\hat f(x)\,\frac{dt}{t}
&=
\int_{\mathbb R^n}
f(y)
\left(
\int_{\varepsilon}^{\infty}
\psi\!\left(\frac{|x-y|}{t}\right)
\frac{dt}{t^{n+1}}
\right) dy 
=
\int_{\mathbb R^n}
f(y)\,
\tilde\psi_\varepsilon(x-y)\,dy.
\end{align*}
This proves \eqref{eq:epsilon-convolution}.

Let
\[
c_1
:=
\frac{\pi^{k/2}\sigma_{n-k-1}}{2\sigma_{n-1}}.
\]
We next claim 
\begin{equation}\label{eq:260129-11}
\tilde\psi(x)
=c_1
|x|^{1-n}
\lambda(|x|),
\end{equation}
By the change of variables $u=|x|/t$ in \eqref{eq:260119-26}, we obtain
\[
\tilde\psi(x)
=
|x|^{-n}
\int_0^{|x|}
\psi(u)\,u^{\,n-1}\,du.
\]
Substituting the above expression for $\psi$
into $\tilde\psi$ and applying Fubini's theorem,
we arrive at
\[
\tilde\psi(x)
=
c_{k,n}\,|x|^{-n}
\int_0^{|x|}
u
\left(
\int_0^u
s^{\,n-k-1} w(s)\,(u^2-s^2)^{\frac{k}{2}-1}
\,ds
\right) du.
\]

Introduce the quadratic substitution $r=s^2$, $ds=\frac{dr}{2\sqrt r}$,
and define $\widetilde w$ by \eqref{eq:260128-1}, so that
$s^{\,n-k-1} w(s)\,ds = \frac12 \widetilde w(r)\,dr$.
Then
\[
\tilde\psi(x)
=
\frac{c_{k,n}}{2}
|x|^{-n}
\int_0^{|x|}
u
\left(
\int_0^{u^2}
(u^2-r)^{\frac{k}{2}-1}
\widetilde w(r)\,dr
\right) du.
\]Changing variables $v=u^2$ yields
\begin{align*}
\tilde\psi(x)
&=
\frac{c_{k,n}}{4}
|x|^{-n}
\int_0^{|x|^2}
\left(
\int_0^v
(v-r)^{\frac{k}{2}-1}
\widetilde w(r)\,dr
\right) dv \\
&=
\frac{c_{k,n}}{4}
|x|^{-n}
\int_0^{|x|^2}
\left(
\int_r^{|x|^2}
(v-r)^{\frac{k}{2}-1}
\widetilde w(r)\,dv
\right) dr \\
&=
\frac{c_{k,n}}{2k}
|x|^{-n}
\int_0^{|x|^2}
(|x|^2-r)^{\frac{k}{2}}
\widetilde w(r)\,dr \\ 
&=
\frac{c_{k,n}}{2k} \cdot \Gamma\!\left(\frac{k}{2}+1\right) |x|^{1-n} \lambda(|x|),
\end{align*}

where $\lambda$ is defined by \eqref{eq:260129-1},
and $c_{k,n}$ is the constant as in \eqref{eq:psi-w-relation}. Using
\[
\Gamma\!\left(\frac{k}{2}+1\right)= \frac{k}{2}\Gamma\!\left(\frac{k}{2}\right)
\quad \text{and} \quad
\sigma_{k-1}=\frac{2\pi^{k/2}}{\Gamma\!\left(\frac{k}{2}\right)},
\]
we arrive at
\[
\tilde\psi(x)
=
c_1
|x|^{1-n}
\lambda(|x|),
\]
which yields \eqref{eq:260129-11}.

Finally,
since $\tilde\psi$ admits an integrable radially decreasing majorant,
the family $\{\tilde\psi_\varepsilon\}_{\varepsilon>0}$
forms an approximate identity.
Therefore,
\[
\lim_{\varepsilon\downarrow0}
\int_{\varepsilon}^{\infty}
W_t^*\hat f(x)\,\frac{dt}{t}
=
c \,f(x)
\]
both in the topology of $X$ and almost everywhere.
Here
\begin{equation}
c 
=\int_{{\mathbb R}^n}\tilde \psi(|x|)\,dx
=c_1\sigma_{n-1}\int_0^\infty \lambda(s)\,ds =
\frac{\pi^{\frac{n}{2}}}{\Gamma\left(\frac{n-k}{2}\right)}\int_0^\infty \lambda(s)\,ds.
\label{eq 20260308-1}
\end{equation}
This completes the proof of Theorem~\ref{thm:wavelet-reconstruction1}.

We note that the constant $c$ in \eqref{eq 20260308-1} can be further calculated as follows: 
Since
\[
\lambda(s)
=\frac{1}{s\Gamma(\frac12k+1)}\int_0^{s^2}(s^2-r)^{\frac{k}{2}}\tilde w(r)\,dr
\]
we have
\begin{align*}
\int_0^{\infty}  \lambda(s) \, ds 
&=   \int_0^{\infty}   \frac{1}{s} \left( \frac{1}{\Gamma(\frac12k+1)} \int_0^{s^2}(s^2-r)^{\frac{k}{2}}\tilde w(r)\,dr \right) ds \notag \\ 
&= \int_0^{\infty}   I_+^{\frac{1}{2}k+1}[\widetilde{w}](s^2)  \, \frac{ds}{s} \notag \\ 
&= \frac{1}{2} \int_0^{\infty}   I_+^{\frac{1}{2}k+1}[\widetilde{w}](t)  \, \frac{dt}{t}. 
\end{align*}
Here,
\[
\tilde w(s)=s^{\frac{n-k}{2}-1}w(\sqrt{s})
\]
and $w(r)$ is admissible, so it is radial and satisfies 
\eqref{eq:radial-decay-condition} and \eqref{eq:radial-moment-conditions}. 
That is, for some $\beta>k$,
\begin{equation}
\int_1^\infty r^{\beta+n-k-1}|w(r)|\,dr
< \infty,
\notag 
\end{equation}
and also
\begin{equation}
\int_0^\infty r^{j+n-k-1}w(r)\,dr
= 0,
\qquad
j = 0,2,4,\dots,2\left[\frac{k}{2}\right]
\notag 
\end{equation}
hold. Therefore, by the change of variables $r=\sqrt{s}$,
\begin{align}
\int_1^\infty r^{\beta+n-k-1}|w(r)|\,dr 
&= 
\frac{1}{2} \int_1^\infty s^{\frac{\beta+n-k}{2}-1}|w(\sqrt{s})|\,ds = 
\frac{1}{2} \int_1^\infty s^{\frac{\beta}{2}} |\widetilde{w}(s)|\,ds <\infty
\label{eq 20260307-1-noi}
\end{align}
where we note that $\frac{\beta}{2}>\frac{k}{2}$. Similarly,
\begin{align}
\int_{0}^{\infty} r^{j+n-k-1} w(r)\,dr
&=
\frac{1}{2}\int_{0}^{\infty} s^{\frac{j+n-k}{2}-1} w(\sqrt{s})\,ds 
=
\frac{1}{2}\int_{0}^{\infty} s^{J}\,\widetilde{w}(s)\,ds
=0 ,
\label{eq:20260307-2}
\end{align}
where $J=\frac{j}{2}$. The equality \eqref{eq:20260307-2} holds for
\[
J=0,1,2,\ldots,\left[ \frac{k}{2}\right].
\]

Therefore, by Lemma~\ref{lem:260307-1}, we obtain
\begin{align*}
\int_{0}^{\infty} \lambda(s)\,ds
&=
\frac{1}{2}\int_{0}^{\infty} I_{+}^{\frac{k}{2}+1}[\widetilde{w}](t)\,\frac{dt}{t} \\
&=
\frac{1}{2}
\begin{cases}
\displaystyle
\Gamma\!\left(-\frac{k}{2}\right)
\int_{0}^{\infty} t^{\frac{k}{2}} \widetilde{w}(t)\,dt,
& \text{if $k$ is odd}, \\[12pt]
\displaystyle
\frac{(-1)^{1+\frac{k}{2}}}{\left(\frac{k}{2}\right)!}
\int_{0}^{\infty} t^{\frac{k}{2}} \widetilde{w}(t)\log t\,dt,
& \text{if $k$ is even},
\end{cases}
\\[6pt]
&=
\frac{1}{2}
\begin{cases}
\displaystyle
\Gamma\!\left(-\frac{k}{2}\right)
\int_{0}^{\infty} t^{\frac{n}{2}-1} w(\sqrt{t})\,dt,
& \text{if $k$ is odd}, \\[12pt]
\displaystyle
\frac{(-1)^{1+\frac{k}{2}}}{\left(\frac{k}{2}\right)!}
\int_{0}^{\infty} t^{\frac{n}{2}-1} w(\sqrt{t}) \log t\,dt,
& \text{if $k$ is even},
\end{cases}
\\[6pt]
&=
\begin{cases}
\displaystyle
\Gamma\!\left(-\frac{k}{2}\right)
\int_{0}^{\infty} u^{n-1} w(u)\,du,
& \text{if $k$ is odd}, \\[12pt]
\displaystyle
\frac{2(-1)^{1+\frac{k}{2}}}{\left(\frac{k}{2}\right)!}
\int_{0}^{\infty} u^{n-1} w(u)\log u\,du,
& \text{if $k$ is even}.
\end{cases}
\end{align*}
Substituting this into $c$, we obtain
\[
c
=\frac{\pi^{\frac{n}{2}}}{\Gamma\left(\frac{n-k}{2}\right)}\int_0^\infty \lambda(s)\,ds 
= 
\frac{\pi^{\frac{n}{2}}}{\Gamma\left(\frac{n-k}{2}\right)}  
\times 
\begin{cases}
\displaystyle
\Gamma\left(-\frac{k}{2}\right)
\int_{0}^{\infty} s^{n-1}\, w(s)\,ds,
& \text{if $k$ is odd}, \\[3ex]
\displaystyle
\frac{2(-1)^{1+\frac{k}{2}}}{ \left( \frac{k}{2} \right)!}
\int_{0}^{\infty} s^{n-1}\, w(s)\,\log s\,ds,
& \text{if $k$ is even}.
\end{cases}
\]

\subsection{Proof of Theorem \ref{Theorem 3.5}}
We now turn to ridgelet transforms.
Under a suitable normalization,
the Calder\'on reproducing formula
follows from the integral representation
\begin{equation}\label{eq:calderon-integral-representation}
f
=
\int_0^{\infty} f * w_t \,\frac{dt}{t},
\end{equation}
provided that $w = u * v$.
Admissible pairs of functions $u$ and $v$ in
the Calder\'on reproducing formula
are determined by their convolution $w = u * v$,
which must satisfy appropriate cancellation conditions
(see \cite[Section~12]{Rubin2015} and \cite{FJW1991}).

The same deconvolution idea applies if we replace
\eqref{eq:calderon-integral-representation}
by the reconstruction formula
\eqref{eq:dt-over-t-reconstruction}.
We recall the notation
\begin{equation}\label{eq:Ut-def}
U_t f(\tau)
=
t^{k-n}
\int_{\mathbb{R}^n}
f(x)\,
u\!\left(\frac{|x-\tau|}{t}\right)
\,dx,
\end{equation}
and
\begin{equation}\label{eq:Vt-star-def}
V_t^{*}\varphi(x)
=
t^{k-n}
\int_{{\mathcal G}_{n,k}}
\varphi(\tau)\,
v\!\left(\frac{|x-\tau|}{t}\right)
\,d\mu_{\mathcal{G}_{n,k}}(\tau),
\qquad t>0.
\end{equation}

As observed in Rubin~\cite[(3.20)]{Rubin2004}, we have
\[
W_t^* \hat{f} = V_t^* U_t f
\]
for all $f \in L^p(\mathbb{R}^n)$.
Theorem \ref{Theorem 3.5} is clear from Theorem \ref{thm:wavelet-reconstruction1}.
\section{Examples of admissible and non-admissible function spaces}
\label{s4}

In this section we examine several concrete classes of function spaces
in connection with the abstract framework developed above.
Our purpose is to clarify the scope and limitations of
Theorems
\ref{thm:wavelet-reconstruction},
\ref{thm:wavelet-reconstruction1},
and
\ref{Theorem 3.5}
by testing their assumptions on familiar Banach lattices.

In Section \ref{subsec:lorentz-spaces}
we first show that the Lorentz spaces $L^{p,q}({\mathbb R}^n)$
with $1<p<\frac{n}{k}$ and $1\le q\le\infty$
fit naturally into our setting, and that all three reconstruction formulas
are valid in this scale.
We then turn to Lebesgue spaces with variable exponents
$L^{p(\cdot)}({\mathbb R}^n)$
in Section \ref{subsec:variable-exponent},
where the validity of the reconstruction theorems is ensured under
standard $\log$-H\"older continuity and integrability assumptions on the exponent.
Finally, we discuss the classical Morrey spaces
${\mathcal M}^{r_0}_r({\mathbb R}^n)$ in Section \ref{subsec:morrey-spaces},
which, despite the fact that singular integral operators are bounded
${\mathcal M}^{r_0}_r({\mathbb R}^n)$ whenever $1<r \le r_0<\infty$,
fail to satisfy the embedding and integrability conditions required
by our theory.
This contrast highlights the structural nature of the hypotheses
imposed on the ambient space $X$ in the reconstruction theorems.

\subsection{Lorentz spaces}
\label{subsec:lorentz-spaces}

Let $1 \le p < \infty$ and $1 \le q \le \infty$.
The Lorentz space $L^{p,q}({\mathbb R}^n)$ consists of all measurable
functions $f \in L^0({\mathbb R}^n)$ for which the quasi-norm
\[
\|f\|_{L^{p,q}}
:=
\left(
\int_0^{\infty}
\bigl(t^{1/p} f^*(t)\bigr)^q
\,\frac{dt}{t}
\right)^{1/q}
\]
is finite, with the usual modification when $q=\infty$.
Here $f^*$ denotes the non-increasing rearrangement of $f$.

Moreover, $L^{p,q}({\mathbb R}^n)$ is a Banach lattice and
$C_{\rm c}({\mathbb R}^n)$ is dense in $L^{p,q}({\mathbb R}^n)$
whenever $1<p<\infty$ and $1 \le q<\infty$.

Lorentz spaces arise by real interpolation:
for suitable $1<p_0<p<p_1<\infty$ and $0<\theta<1$,
\[
L^{p,q}({\mathbb R}^n)
=
(L^{p_0}({\mathbb R}^n), L^{p_1}({\mathbb R}^n))_{\theta,q},
\]
see \cite{BeLo76}
for example.
So,
if $1<p<\infty$, then $L^{p,q}({\mathbb R}^n)$ is a Banach space.
As a consequence, the Hardy--Littlewood maximal operator $M$
is bounded on $L^{p,q}({\mathbb R}^n)$ for all
$1<p<\infty$ and $1 \le q \le \infty$.
See \cite{ArinoMuckenhoupt1990,CarroRaposoSoria2007,Hunt1966}.

Finally, if
\[
1 < p < r<\frac{n}{k}
\quad\text{and}\quad
1 \le q \le \infty,
\]
then the inclusion
\[
L^{p,q}({\mathbb R}^n)
\subset
L^1({\mathbb R}^n)+L^r({\mathbb R}^n)
\]
holds. Therefore, all the structural assumptions imposed on the Banach lattice
$X$ in Theorems \ref{thm:wavelet-reconstruction},
\ref{thm:wavelet-reconstruction1}, and \ref{Theorem 3.5} are satisfied with $X=L^{p,q}({\mathbb R}^n)$.
See \cite{ArinoMuckenhoupt1990,CarroRaposoSoria2007,Hunt1966}.
Consequently, the corresponding wavelet and Calder\'on-type
reconstruction formulas are valid in Lorentz spaces
$L^{p,q}({\mathbb R}^n)$
for $1<p<\frac{n}{k}$ and $1 \le q \le \infty$.

\subsection{Lebesgue spaces with variable exponents}
\label{subsec:variable-exponent}

In this subsection we restrict ourselves to the \emph{non-weighted} setting.
Let $p(\cdot):\mathbb{R}^n\to(0,\infty)$ be a measurable function.
The variable exponent Lebesgue space
$L^{p(\cdot)}({\mathbb R}^n)$
consists of all measurable functions
$f\in L^0({\mathbb R}^n)$
such that
\[
\| f \|_{L^{p(\cdot)}}
:=
\inf\left\{
\lambda>0:\;
\int_{\mathbb{R}^n}
\left(\frac{|f(x)|}{\lambda}\right)^{p(x)}dx \le 1
\right\}
<\infty.
\]
Equipped with this Luxemburg norm,
$L^{p(\cdot)}({\mathbb R}^n)$
is a Banach function space whenever $1<p_- \le p_+<\infty$.

\medskip

We recall standard notation and regularity conditions for variable exponents.

\begin{definition}
Let $r(\cdot)$ be a variable exponent.
\begin{enumerate}
\item
We define
\[
r_-:=\operatorname*{ess\,inf}_{x\in{\mathbb R}^n} r(x),
\qquad
r_+:=\operatorname*{ess\,sup}_{x\in{\mathbb R}^n} r(x).
\]
\item
The class
${\mathcal P}_0={\mathcal P}_0({\mathbb R}^n)$
consists of all measurable exponents
$r(\cdot):{\mathbb R}^n\to(0,\infty)$
such that $0<r_- \le r_+<\infty$.
The subclass
${\mathcal P}={\mathcal P}({\mathbb R}^n)$
contains those exponents in ${\mathcal P}_0$ satisfying $r_->1$.
\end{enumerate}
\end{definition}

\begin{definition}
Let $r(\cdot)\in{\mathcal P}_0$.
\begin{enumerate}
\item
We say that $r(\cdot)$ satisfies the \emph{local $\log$-H\"older continuity condition}
if there exists a constant $c_*>0$ such that
\begin{equation}\label{logHolder}
|r(x)-r(y)|
\le
\frac{c_*}{\log(|x-y|^{-1})},
\qquad
|x-y|\le \tfrac12.
\end{equation}
\item
If \eqref{logHolder} holds only with $y=0$,
then $r(\cdot)$ is said to be $\log$-H\"older continuous at the origin.
We denote by
${\rm LH}_0={\rm LH}_0({\mathbb R}^n)$
the class of such exponents.
\item
We say that $r(\cdot)$ satisfies the \emph{$\log$-H\"older decay condition at infinity}
if there exist constants $c^*>0$ and $r_\infty\in(0,\infty)$ such that
\begin{equation}\label{decay}
|r(x)-r_\infty|
\le
\frac{c^*}{\log(e+|x|)},
\qquad
x\in{\mathbb R}^n.
\end{equation}
The class of all such exponents is denoted by
${\rm LH}_\infty={\rm LH}_\infty({\mathbb R}^n)$.
\item
Finally, we write
${\rm LH}={\rm LH}({\mathbb R}^n)$
for the class of all exponents satisfying both
\eqref{logHolder} and \eqref{decay}.
\end{enumerate}
\end{definition}

\begin{lemma}\label{lem:2.2}
Suppose that the variable exponent $p(\cdot)$ satisfying $\frac{1}{p(\cdot)} \in {\rm LH}$, and
\[
1\le  p_- \le p_+ \le \infty.
\]
Then the following assertions hold.
\begin{enumerate}
\item
For every cube $Q = Q(z,r)$ with $z \in {\mathbb R}^n$ and $r \le 1$, we have
\[
|Q|^{1/p_-(Q)} \lesssim |Q|^{1/p_+(Q)}.
\]
In particular,
\[
|Q|^{1/p_-(Q)}
\sim
|Q|^{1/p_+(Q)}
\sim
|Q|^{1/p(z)}
\sim
\|\chi_Q\|_{L^{p(\cdot)}({\mathbb R}^n)}.
\]

\item
For every cube $Q = Q(z,r)$ with $z \in {\mathbb R}^n$ and $r \ge 1$, we have
\[
\|\chi_Q\|_{L^{p(\cdot)}({\mathbb R}^n)}
\sim
|Q|^{1/p_\infty}.
\]
\end{enumerate}
\end{lemma}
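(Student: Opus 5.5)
The statement to prove is Lemma~\ref{lem:2.2}. The plan is to first use $1/p(\cdot)\in{\rm LH}$ to control the oscillation of $1/p$ on small cubes and its deviation from $1/p_\infty$ on large cubes. Then we compute $\|\chi_Q\|_{L^{p(\cdot)}}$ directly from the Luxemburg norm, using the modular $\rho(f)=\int|f|^{p(x)}dx$ and the elementary fact that $\|\chi_Q\|\le 1$ if and only if $\rho(\chi_Q)\le 1$.

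For part (1), let $Q=Q(z,r)$ with $r\le1$. Since $\ell(Q)\lesssim 1$, local log-H\"older continuity of $1/p$ gives
\[
\frac1{p_-(Q)}-\frac1{p_+(Q)}\le \frac{C}{\log(e+1/\ell(Q))}.
\]
Hence
\[
|Q|^{1/p_-(Q)-1/p_+(Q)}=\exp\Bigl(\bigl(\tfrac1{p_-(Q)}-\tfrac1{p_+(Q)}\bigr)\log|Q|\Bigr)\ge e^{-C'} .
\]
Because $|Q|\lesssim1$, we also have the trivial inequality $|Q|^{1/p_-(Q)}\le C|Q|^{1/p_+(Q)}$ up to a normalisation constant when $|Q|$ slightly exceeds $1$. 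Together these give the two-sided equivalence, and $|Q|^{1/p(z)}$ is sandwiched between the two. For the norm we take $\lambda=|Q|^{1/p_-(Q)}$ (after a constant adjustment) and compute
\[
\rho(\chi_Q/\lambda)=\int_Q \lambda^{-p(x)}dx\le |Q|\,|Q|^{-p(x)/p_-(Q)}\cdot(\text{bounded factor})\lesssim 1,
\]
which gives the upper bound. The lower bound comes symmetrically from $\lambda=c|Q|^{1/p_+(Q)}$ with $c$ small, for which $\rho>1$. The case $p_+=\infty$ is handled by the convention $1/p=0$: all estimates are phrased in terms of $1/p$, so they remain valid.

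For part (2), let $r\ge1$. Here we use the decay condition. Write $\lambda=|Q|^{1/p_\infty}$ and estimate
\[
\rho(\chi_Q/\lambda)=\int_Q |Q|^{-p(x)/p_\infty}dx .
\]
The key step is the standard comparison $|Q|^{-p(x)/p_\infty}\le C|Q|^{-1}+C(e+|x|)^{-N}$, or equivalently the fact that $L^{p(\cdot)}$ and $L^{p_\infty}$ agree up to an additive term $(e+|x|)^{-N}$ whenever $|1/p(x)-1/p_\infty|\le c/\log(e+|x|)$. This is derived from the pointwise inequality
\[
t^{p(x)}\le C\,t^{p_\infty}+(e+|x|)^{-N}, \qquad 0\le t\le1.
\]
To prove that inequality, split into the cases $t\ge(e+|x|)^{-N}$, where the log-decay makes the exponent difference harmless, and $t<(e+|x|)^{-N}$, where $t^{p(x)}$ is dominated by the additive term directly. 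Integrating over $Q$ gives $\rho\lesssim 1$ because $\int(e+|x|)^{-N}dx<\infty$, which yields the upper bound. For the lower bound we apply the same inequality with the roles of $p$ and $p_\infty$ swapped and take a small multiple of $\lambda$.

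I expect the main obstacle to be part (2): making the comparison $t^{p(x)}$ versus $t^{p_\infty}$ uniform when $p_+$ may be infinite, and handling cubes far from the origin, where $|x|$ is large but $|Q|$ may also be large. If the pointwise trick becomes awkward, I would fall back on the standard references (Diening--Harjulehto--H\"ast\"o--R\r{u}\v{z}i\v{c}ka, Lemma 4.5.3 and Corollary 4.5.9), where these estimates are proved in exactly this form.
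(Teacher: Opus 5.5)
The paper states this lemma without proof (it is quoted from the variable-exponent literature), so your proposal has to stand on its own. It follows the standard route, and part (1) together with the upper bound in part (2) are correct. In (1), the factor you call bounded is really $e^{C'p(x)}$. It is harmless only because the constant goes inside the power: $\lambda=e^{C'}|Q|^{1/p_-(Q)}$ gives $\lambda^{-p(x)}\le|Q|^{-1}$ pointwise. More simply, for $|Q|\le1$ the modular yields $|Q|^{1/p_-(Q)}\le\|\chi_Q\|_{L^{p(\cdot)}}\le|Q|^{1/p_+(Q)}$ with no regularity at all, and log-H\"older is needed only for $|Q|^{1/p_+(Q)}\lesssim|Q|^{1/p_-(Q)}$.

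The gap is the lower bound in part (2), which you settle in one sentence. Two things go wrong.

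\emph{The additive error is not small.} Integrating the swapped inequality at $t=|Q|^{-1/p_\infty}$ over $Q$ gives only
\[
1\le C\int_Q|Q|^{-p(x)/p_\infty}\,dx+\int_Q(e+|x|)^{-N}\,dx .
\]
This says nothing unless the last integral is $<1$. With $N$ merely larger than $n$, as in your upper bound, it can exceed $1$: for $n=3$, $N=4$ the integral over $\mathbb R^3$ is $4\pi/(3e)>1$. You must first fix $N$ so large that $\int_{\mathbb R^n}(e+|x|)^{-N}dx\le\frac12$, accept the larger constant $C=C(N)$, and only then take $\lambda=c|Q|^{1/p_\infty}$ with $c<1/(2C)$, using $c^{-p(x)}\ge c^{-1}$.

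\emph{The swapped inequality is not uniform.} When $p(x)>p_\infty$ and $t\ge(e+|x|)^{-N}$, the decay condition only gives $p(x)-p_\infty\le c^*p(x)p_\infty/\log(e+|x|)$. So the constant is $\exp(Nc^*p_\infty p(x))$, which requires $p_+<\infty$. For $p_+=\infty$, which you meant to cover via the convention $1/p=0$, the inequality is false. Take $1/p(x)=\frac12\min\{1,(|x|-1)_+\}$: then $p_\infty=2$ and $p=\infty$ on $B(0,1)$, and at $x=0$ the requirement $t^2\le C\cdot 0+e^{-N}$ fails for $t$ near $1$. This is exactly the obstacle you flagged, and pointing to the references does not close it.

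A repair goes as follows. Since $1/p(x)\to1/p_\infty>0$, there is $R_0$ with $p\le2p_\infty$ outside $B_0:=B(0,R_0)$. There the swapped inequality holds with the uniform constant $C=\exp(2Nc^*p_\infty^2)$.
\begin{itemize}
\item If $|Q|\ge4|B_0|$, then $\int_{Q\setminus B_0}|Q|^{-1}\,dx\ge\frac34$. Running the integration argument on $Q\setminus B_0$, with $N$ chosen so that $\int_{\mathbb R^n}(e+|x|)^{-N}dx\le\frac14$, gives $\int_Q|Q|^{-p(x)/p_\infty}\,dx\ge 1/(2C)$, and the small multiple of $\lambda$ finishes.
\item If $1\le|Q|<4|B_0|$, then $|Q|^{1/p_\infty}\sim1$. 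Meanwhile $\|\chi_Q\|_{L^{p(\cdot)}}\ge\|\chi_{Q'}\|_{L^{p(\cdot)}}\gtrsim1$ for a unit subcube $Q'\subset Q$, by part (1).
\end{itemize}
If one reads ${\rm LH}$ literally as the paper defines it, inside $\mathcal P_0$, then $1/p\in{\rm LH}$ already forces $p_+<\infty$, and only the first issue remains.
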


If a variable exponent $p(\cdot)$ satisfies
$\frac{1}{p(\cdot)}\in{\rm LH}$
and
$1<p_- \le p_+\le\infty$,
then the Hardy--Littlewood maximal operator $M$
is bounded on $L^{p(\cdot)}({\mathbb R}^n)$
\cite{Sawano2018}.
Moreover, under the same assumptions,
$L^{p(\cdot)}({\mathbb R}^n)$
is a Banach lattice belonging to the class $\mathcal A$
as in Definition \ref{def:Muckenhoupt}
introduced in Section~\ref{s1},
and satisfies the density condition
$C_{\rm c}^\infty({\mathbb R}^n)\subset L^{p(\cdot)}({\mathbb R}^n)$.
See \cite{Diening2004,Samko1999}.

If
\[
1\le p_- \le p_+ < \frac{n}{k},
\]
then the inclusion
\[
L^{p(\cdot)}({\mathbb R}^n)
\subset
L^1({\mathbb R}^n)+L^u({\mathbb R}^n)
\quad\text{for some } u<\tfrac{n}{k}
\]
holds.
Consequently,
Theorems
\ref{thm:wavelet-reconstruction},
\ref{thm:wavelet-reconstruction1},
and
\ref{Theorem 3.5}
are applicable to the variable exponent Lebesgue spaces
$L^{p(\cdot)}({\mathbb R}^n)$.
\subsection{Morrey spaces and limitations of the reconstruction theorems}
\label{subsec:morrey-spaces}

Let $1 \le p \le p_0 < \infty$.
For a function $f \in L^p_{\mathrm{loc}}({\mathbb R}^n)$,
the (classical) Morrey norm is defined by
\begin{equation}\label{eq:130709-1A}
\| f \|_{{\mathcal M}^{p_0}_p}
:=
\sup_{x \in {\mathbb R}^n,\, R>0}
|B(x,R)|^{\frac{1}{p_0}-\frac{1}{p}}
\left(
\int_{B(x,R)} |f(y)|^{p} \, dy
\right)^{\frac{1}{p}}.
\end{equation}
The Morrey space ${\mathcal M}^{p_0}_p({\mathbb R}^n)$
consists of all functions $f \in L^p_{\mathrm{loc}}({\mathbb R}^n)$
for which $\|f\|_{{\mathcal M}^{p_0}_p} < \infty$.
This definition extends the classical Lebesgue spaces, since
${\mathcal M}^{p_0}_{p_0}({\mathbb R}^n)$ coincides with
$L^{p_0}({\mathbb R}^n)$ with equality of norms.
We also remark that in \cite{Tanaka2015},
Hitoshi Tanaka introduced the weighted Morrey space
${\mathcal M}^{p_0}_p(w)$, consisting of measurable functions $f$
such that
\[
\| f \|_{{\mathcal M}^{p_0}_p(w)}
:=
\| f \cdot w^{\frac1p} \|_{{\mathcal M}^{p_0}_p}
< \infty 
\]
to construct a counterexample mentioned in Section \ref{s1}.

Let $1 < p \le p_0 < \infty$.
Then the Hardy--Littlewood maximal operator $M$
and the Riesz transforms ${\mathcal R}_j$, $j=1,\dots,n$,
are bounded on ${\mathcal M}^{p_0}_p({\mathbb R}^n)$;
see \cite{ChFr87}.
Moreover, for all $1 \le p \le p_0 < \infty$,
the Morrey space ${\mathcal M}^{p_0}_p({\mathbb R}^n)$
belongs to the class $\mathcal A$ of Banach lattices considered above.

However, Morrey spaces fail to satisfy a key structural assumption
required in Theorems
\ref{thm:wavelet-reconstruction},
\ref{thm:wavelet-reconstruction1},
and
\ref{Theorem 3.5}.
Indeed, as shown by the example of the set $F$
constructed in \cite[Example~11]{book},
the space ${\mathcal M}^{p_0}_p({\mathbb R}^n)$
with $1 < p \le p_0 < \infty$
is never continuously embedded into
$L^1({\mathbb R}^n) + L^u({\mathbb R}^n)$
for any $1 \le u < \infty$.
In particular, the inclusion condition
\[
X \subset L^1({\mathbb R}^n) + L^p({\mathbb R}^n)
\quad \text{for some } p < \frac{n}{k},
\]
assumed in Theorem~\ref{thm:wavelet-reconstruction}
(and hence in Theorems~\ref{thm:wavelet-reconstruction1}
and~\ref{Theorem 3.5}),
fails for such Morrey spaces.

As a further manifestation of this obstruction,
the $k$-plane Radon transform of the characteristic function $\chi_F$
associated with the above set $F$
may be infinite on a set of positive measure.
Consequently, none of the reconstruction formulas established in
Theorems
\ref{thm:wavelet-reconstruction},
\ref{thm:wavelet-reconstruction1},
or
\ref{Theorem 3.5}
is applicable to the classical Morrey spaces
${\mathcal M}^{p_0}_p({\mathbb R}^n)$
when $1 < p < p_0 < \infty$.

On the other hand, since $C_{\rm c}({\mathbb R}^n)$
is dense in the closure
$\widetilde{\mathcal M}^{p_0}_p({\mathbb R}^n)$,
a standard density argument shows that the corresponding
wavelet and Calder\'on-type reconstruction formulas
remain valid in the Morrey-type spaces
$\widetilde{\mathcal M}^{p_0}_p({\mathbb R}^n)$.

\section{Remarks on Banach lattices}
\label{s5}

In this section we compare several structural properties of Banach lattices
that are relevant to convolution estimates and to the abstract framework
developed in the previous sections.

A fundamental observation due to Nogayama
shows that translation invariance of a Banach function space
is equivalent to an $L^{1}$-convolution inequality;
see \cite[Theorem~1.1]{Nogayama26} below.
In particular, when one seeks convolution inequalities
with general $L^{1}$-functions,
translation invariance of the underlying space
plays a decisive role.

\begin{proposition}[{\rm \cite[Theorem~1.1]{Nogayama26}}]
\label{thm:Young-BFS}\
\begin{enumerate}
\item[$(1)$]
Let $X$ be a saturated Banach function space.
Assume that $X$ is translation invariant in the sense that
\begin{equation}\label{eq:translation}
\|f(\cdot - z)\|_{X}
\le
A \|f\|_{X},
\qquad
\text{for all } f \in X \text{ and } z \in {\mathbb R}^{n}.
\end{equation}
Then Young's inequality
\[
\|f * g\|_{X}
\le
A \|f\|_{X} \, \|g\|_{L^{1}({\mathbb R}^{n})}
\]
holds for all $f \in X$ and all $g \in L^{1}({\mathbb R}^{n})$.

\item[$(2)$]
Conversely, let $X$ be a ball Banach function space.
If Young's inequality
\[
\|f * g\|_{X}
\le
B \|f\|_{X} \, \|g\|_{L^{1}({\mathbb R}^{n})}
\]
holds for all $f \in X$ and all $g \in L^{1}({\mathbb R}^{n})$,
then $X$ is translation invariant and \eqref{eq:translation} holds
for all $z \in {\mathbb R}^{n}$ with $A=B$.
\end{enumerate}
\end{proposition}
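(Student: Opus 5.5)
For $(1)$, I would reduce the convolution to an average of translates and then use the Fatou property together with duality. First take $f\ge0$ and $g\ge0$ in $L^1(\mathbb R^n)$; the general case then follows from the lattice property $\rm(L)$, because $|f*g|\le |f|*|g|$ and $g=g_+-g_-$ (for complex $g$ one splits into real and imaginary parts, or simply works with $|g|$ throughout). Since $X$ is saturated and has the Fatou property, the Lorentz--Luxemburg theorem gives $X=X''$ isometrically, so
\[
\|f*g\|_X=\sup\Bigl\{\int_{\mathbb R^n}(f*g)(x)\,h(x)\,dx:\ h\ge0,\ \|h\|_{X'}\le1\Bigr\}.
\]
For such an $h$, Tonelli's theorem gives
\[
\int (f*g)h\,dx=\int_{\mathbb R^n} g(z)\Bigl(\int_{\mathbb R^n} f(x-z)h(x)\,dx\Bigr)dz .
\]
By the Hölder inequality for $X$ and $X'$ and by \eqref{eq:translation}, the inner integral is at most $\|f(\cdot-z)\|_X\|h\|_{X'}\le A\|f\|_X$. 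Integrating in $z$ yields $\int (f*g)h\le A\|f\|_X\|g\|_{L^1}$. Taking the supremum over $h$ proves $(1)$, with membership $f*g\in X$ coming from $X=X''$.

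The main point to check in $(1)$ is that the duality formula is valid with merely saturated (not $\sigma$-finite-integrable) structure. For this I would use the form of the Lorentz--Luxemburg theorem for saturated spaces, as in Nieraeth. An alternative that avoids duality is to approximate $g$ by nonnegative simple functions built from small cubes, write $f*g$ as a monotone limit of finite sums of averaged translates, and then invoke the Fatou property. Even there, however, the averages over $z$ in a cube require a Minkowski-type integral inequality in $X$, which again relies on $X''=X$. So I would take the duality route.

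For $(2)$, fix $z$ and $f\in X$ with $f\ge0$. Let $g_\varepsilon=|B(z,\varepsilon)|^{-1}\chi_{B(z,\varepsilon)}$, so that $\|g_\varepsilon\|_{L^1}=1$ and $f*g_\varepsilon(x)=\fint_{B(x-z,\varepsilon)}f$. By the Lebesgue differentiation theorem (available since $\rm(BLI)$ makes $f$ locally integrable), $f*g_\varepsilon\to f(\cdot-z)$ almost everywhere as $\varepsilon\downarrow0$. The Fatou property in its liminf form (which is equivalent to $\rm(F)$ for lattice norms) then gives
\[
\|f(\cdot-z)\|_X\le\liminf_{\varepsilon\downarrow0}\|f*g_\varepsilon\|_X\le B\|f\|_X .
\]
The general $f$ follows from $|f(\cdot-z)|=|f|(\cdot-z)$ together with $\rm(L)$; this also shows $f(\cdot-z)\in X$. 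The only subtlety here is justifying the liminf version of Fatou for a sequence that is not monotone. I would handle it by applying $\rm(F)$ to $\inf_{m\ge j}f*g_{\varepsilon_m}$, which increases in $j$ and is dominated by each term.
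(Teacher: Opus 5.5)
The paper gives no proof of this proposition; it is quoted from Nogayama's Theorem~1.1, so there is no in-text argument to compare against. Judged on its own, your proof is correct.

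In $(1)$, the steps are sound: the Lorentz--Luxemburg identity $X=X''$ (valid for saturated spaces with the Fatou property), Tonelli's theorem, and the H\"older inequality between $X$ and $X'$ combined with translation invariance give $\int (f*g)h\le A\|f\|_X\|g\|_{L^1}$ for every $h\ge0$ with $\|h\|_{X'}\le1$.

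In $(2)$, the normalized indicators $g_\varepsilon$ give $f*g_\varepsilon(x)=\fint_{B(x-z,\varepsilon)}f\to f(x-z)$ a.e.\ by Lebesgue differentiation, with local integrability supplied by (BLI). Your reduction of the liminf form of Fatou to (F) via $v_j=\inf_{m\ge j}f*g_{\varepsilon_m}$ is legitimate. Indeed, $0\le v_j\le f*g_{\varepsilon_j}\in X$ makes $\{v_j\}$ an increasing sequence in $X$ bounded by $B\|f\|_X$, so $\|f(\cdot-z)\|_X=\sup_j\|v_j\|_X\le B\|f\|_X$.

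Your argument also shows why the two parts carry different hypotheses. Saturation is used in $(1)$ exactly to have $X=X''$, whereas $(2)$ needs only local integrability and the Fatou property.

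One detail should be made explicit in $(1)$. For $f,g\ge0$ the convolution $f*g$ is a priori $[0,\infty]$-valued, while $X''$ consists of a.e.\ finite functions. So before concluding $f*g\in X''=X$ you need $f*g<\infty$ a.e. This follows because $X'$ is saturated when $X$ is saturated with the Fatou property. If $f*g=\infty$ on a set $E$ of positive measure, then the bound $\int (f*g)h\le A\|f\|_X\|g\|_{L^1}\|h\|_{X'}<\infty$ would force every $h\in X'$ to vanish a.e.\ on $E$, contradicting saturation of $X'$. Applied to $|f|$ and $|g|$, the same observation gives $|f|*|g|<\infty$ a.e. That is what makes $f*g$ well defined when you reduce the general case to $f,g\ge0$ via $|f*g|\le|f|*|g|$.
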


\medskip

We emphasize that the classical Morrey spaces
${\mathcal M}^p_q({\mathbb R}^n)$ with $1 \le q \le p < \infty$
fall within the scope of Proposition~\ref{thm:Young-BFS},
and therefore enjoy translation invariance and the corresponding
$L^1$-convolution inequality.

In contrast, if $p(\cdot)\in{\rm LH}$ is a variable exponent,
then the space $L^{p(\cdot)}({\mathbb R}^n)$
fails, in general, to satisfy the assumptions of
Proposition~\ref{thm:Young-BFS},
and Young's inequality for convolution with arbitrary $L^1$-functions
does not hold in this setting.
Nevertheless, if
\[
1 \le p_- \le p_+ < \infty,
\]
then $L^{p(\cdot)}({\mathbb R}^n)$
belongs to the class $\mathcal A$ of Banach lattices,
as follows from Lemma~\ref{lem:2.2}.

\begin{center}
{\bf Acknowledgment}
\end{center} 

 \ \ 

This work was partly supported by MEXT Promotion of Distinctive Joint Research Center Program JPMXP0723833165 and Osaka Metropolitan University Strategic Research Promotion Project (Development of International Research Hubs).  
This work was also partly supported by Tokyo City University Start-up Support for Interdisciplinary Research. 
 This work was supported by Grant-in-Aid for Scientific Research (C) (19K03546, 23K03156) (Sawano) and 
 (C) (22H05079, 22H05082, 25K14517) (Tanaka), the Japan Society for the Promotion of Science, 
 and Japan Science and Technology Agency (JST), CREST Grant Number JPMJCR2433 (Tanaka).

\end{document}